\newtheorem{theorem}{Theorem}
\newtheorem{proposition}{Proposition}
\newtheorem{corollary}{Corollary}
\newtheorem{hypotheses}{Hypotheses}
\newtheorem{remark}{Remark}
\numberwithin{figure}{section}
\numberwithin{table}{section}
\numberwithin{theorem}{section}
\def\m{{\rm m}}
\def\ds{\displaystyle}
\def\pa{\partial}
\def\Div{\mathrm{div}}
\def\N{\mathbb N}
\def\R{\mathbb R}
\def\T{\mathcal T}
\def\mK{{\rm m}(K)}
\def\FKsig{{\mathcal F}_{K,\sigma}}
\def\GKsig{{\mathcal G}_{K,\sigma}}
\def\DPsiKs{D\Psi_{K,\sigma}}
\def\nKsig {\nu_{K,\sigma}}
\def\ts{\tau_{\sigma}}
\def\E{\mathcal E}
\def\somK{\ds\sum_{K\in\T}}
\def\somsigint{\ds\sum_{\substack{\sigma \in \E_{int}\\ \sigma=K|L}}}
\def\somn{\ds\sum_{n=0}^{N_T-1}}
\def\somsig{\ds\sum_{\substack{\sigma \in \E\\ (K=K_{\sigma})}}}
\def\somsige{\ds\sum_{\sigma \in \E}}
\def\NKn{N_K^n}
\def\PKn{P_K^n}
\def\NKnp{N_K^{n+1}}
\def\PKnp{P_K^{n+1}}
\def\Pl{(\cal P_{\lambda})}
\def\Pz{({\cal P}_{0})}
\def\Sl{(\cal S_{\lambda})}
\def\Sz{({\cal S}_{0})}
\title{Study of a finite volume scheme for the drift-diffusion system. Asymptotic behavior in the quasi-neutral limit.}
\author{M. Bessemoulin-Chatard,\thanks{Universit\'e de Nantes, Laboratoire de Math\'ematiques Jean Leray, UMR 6629 - CNRS, 2 rue de la Houssini\`ere - BP 92208,  44322 Nantes Cedex 3, France
 ({\tt Marianne.Bessemoulin@univ-nantes.fr}).}
        \and C. Chainais-Hillairet, \thanks{Laboratoire P. Painlev\'e, CNRS UMR 8524, Universit\'e Lille 1, 59655 Villeneuve d'Ascq Cedex. Project-Team MEPHYSTO, INRIA Lille Nord Europe, 40 av. Halley, 59650 Villeneuve d'Ascq, France. 
         ({\tt Claire.Chainais@math.univ-lille1.fr}).}
        \and M.-H. Vignal\thanks{Institut de Math\'ematiques de Toulouse, 
CNRS UMR 5219, Universit\'e de Toulouse, 
118 route de Narbonne,  31062 TOULOUSE cedex 9, 
France ({\tt mhvignal@math.univ-toulouse.fr}).} }
\date{ }
\begin{document}

\maketitle

\begin{abstract}
In this paper, we are interested in the numerical approximation of the classical time-dependent drift-diffusion system near quasi-neutrality. We consider a fully implicit in time and finite volume in space scheme, where the convection-diffusion fluxes are approximated by Scharfetter-Gummel fluxes. We establish that all the a priori estimates needed to prove the convergence of the scheme does not depend on the Debye length $\lambda$.  This proves that the scheme  is asymptotic preserving in the quasi-neutral limit $\lambda \to 0$. 
\end{abstract}



\section{Introduction}\label{sec.intro}

\subsection{Aim of the paper}

In the modeling of plasmas or semiconductor devices, there is a hierarchy of different models: kinetic models and quasi hydrodynamic models, ranging from Euler-Poisson system to drift-diffusion systems (see \cite{Markowich1986,MRS,Jungelbook,Jungelbook2}).  In each of these models scaled parameters are involved, like the effective mass of electrons, the relaxation time or the rescaled Debye length.  
There is a wide literature on the theoretical validation of the hierarchy of models (see \cite{NBAD,JungelPeng2001,CDS} and references therein). Moreover, an active and recent field of research consists in designing numerical schemes for these physical models which are valid for all range of scaled parameters, and especially when these parameters may tend to 0. These schemes are said to be asymptotic preserving. These methods have proved their efficiency in many situations, for instance: in fluid limits for the Vlasov equation, quasi-neutral limits for the drift-diffusion, Euler or Vlasov equations coupled to the Poisson equation, in diffusive limit for radiative transfer (see \cite{Jin,CDV,Lemoumieussens,BCDS,DDNSV,FilbetJin,ChainaisVignal} among a long list of articles that could not be mentioned here)

In this paper, we consider the numerical approximation of the linear drift-diffusion system. It is a coupled system of parabolic and elliptic equations involving only one dimensionless parameter: $\lambda$, the rescaled Debye length. This parameter $\lambda$ is given by the ratio of the Debye length to the size of the domain; it  measures the typical scale of electric interactions in the semiconductor. Many different numerical methods have been already developed for the approximation of the drift-diffusion system; 
see for instance the mixed exponential fitting schemes proposed in \cite{BMP}  and extended in \cite{Jungel1995ZAMM,JungelPietra}
to the case of nonlinear diffusion. The convergence of some finite volume schemes has been proved by C. Chainais-Hillairet, J.-G. Liu and Y.-J. Peng  in \cite{CCHLP,CCHP}. But, up to our knowledge, all the schemes are studied in the case $\lambda=1$ and the behavior when $\lambda$ tends to 0 has not yet been studied. 

In this paper, we are interested in designing and studying a scheme for the drift-diffusion system applicable for any value of $\lambda$. This scheme must converge for any value of $\lambda\geq 0$ and must remain stable at the quasi-neutral limit $\lambda\to 0$. We consider an implicit in time  and finite volume in space  scheme  with a Scharfetter-Gummel approximation of the convection-diffusion fluxes. As it is classical in the finite volume framework (see \cite{EGHbook}), the proof of convergence of the scheme is based on some a priori estimates which yield the compactness of the sequence of approximate solutions.  In the case of the drift-diffusion system, the a priori estimates needed for the proof of convergence are $L^{\infty}$ estimates on $N$ and $P$, discrete $L^2(0,T,H^1(\Omega))$-estimates on $N$, $P$ and $\Psi$ in the non-degenerate case \cite{CCHLP}, with additional weak-BV estimates on $N$ and $P$ in the degenerate case \cite{CCHP}. 
However, the crucial point in our work is to establish that all the a priori estimates do not depend of $\lambda\geq 0$ and therefore the strategy used in \cite{CCHLP,CCHP} to get them does not directly apply. In order to get estimates which are independent of $\lambda$, we adapt to the discrete level the entropy method   proposed by A.~J\"ungel and Y.-J.~Peng in \cite{JungelPeng2001} and by I. Gasser {\em et al} in \cite{Gasser, GLMS}. The choice of the Scharfetter-Gummel fluxes for the discretization of the convection-diffusion fluxes is essential at this step.

\subsection{The drift-diffusion system}

Let $\Omega$ be an open bounded subset of $\R^d$ ($d\geq 1$)  describing the geometry of a semiconductor device and $T>0$. The unknowns are the density of electrons  and holes, $N$ and $P$,  and the electrostatic potential $\Psi$. This device can be described  by the so-called drift-diffusion system introduced by W. Van Roosbroeck \cite{VanRoosbroeck} (see also \cite{Gajewski_Groger,Markowich1986,MRS}). It writes for all $(x,t)\in\Omega\times [0,T]$:
\begin{subequations}\label{DD-complet}
\begin{equation}
\pa_{t}N+\Div\left(\mu_{N}(-\nabla N+N\nabla \Psi)\right)=-R(N,P),
\end{equation}
\vspace*{-0.5cm}
\begin{equation}
\pa_{t}P+\Div\left(\mu_{P}(-\nabla P-P\nabla \Psi)\right)=-R(N,P),
\end{equation}
\vspace*{-0.5cm}
\begin{equation}
-\lambda^{2}\Delta \Psi=P-N+C,
\end{equation}
\end{subequations}
where the given function $C(x)$ is the doping profile describing fixed background charges and $R(N,P)$ is the recombination-generation rate, which is usually taken as the Shockley-Read-Hall term:
$$ R(N,P)=\frac{NP-N_{i}^{2}}{\tau_{P}N+\tau_{N}P+\tau_{C}}, \quad \tau_{P},\, \tau_{N},\, \tau_{C},\, N_{i}>0.$$
The dimensionless physical parameters $\mu_{N}$, $\mu_{P}$ and $\lambda$ are the rescaled mobilities of electrons and holes, and the rescaled Debye length respectively. 
The system is supplemented with mixed boundary conditions (see \cite{Markowich1986}):  Dirichlet boundary conditions on the ohmic contacts and homogeneous boundary conditions on the insulated boundary segments. It means that the boundary $\partial\Omega$ is split into $\partial \Omega= \Gamma^D \cup \Gamma^N$ with $\Gamma^D\cap \Gamma^N=\emptyset$ and that the boundary conditions write:
\begin{subequations}\label{CL}
\begin{equation}\label{CL_dir}
N(\gamma,t)=N^D(\gamma),\   P(\gamma,t)=P^D(\gamma), \Psi(\gamma,t)=\Psi^D(\gamma), \ (\gamma,t)\in\Gamma^D\times[0,T],
\end{equation}
\vspace{-0.5cm}
\begin{equation}\label{CL_neum}
(\nabla N\cdot\nu)\ (\gamma,t) =(\nabla P\cdot\nu)\ (\gamma,t)=(\nabla \Psi\cdot\nu)\ (\gamma,t)=0 , \ (\gamma,t)\in\Gamma^N\times [0,T],
\end{equation}
\end{subequations}
where $\nu$ is the unit normal to $\partial\Omega$ outward to $\Omega$.

The system \eqref{DD-complet} is also supplemented with initial conditions $N_0$, $P_0$:
\begin{equation}\label{CI} 
N(x,0)=N_0(x),\quad P(x,0)=P_0(x) ,\quad x\in\Omega.
\end{equation}

In this paper, we want to focus on the stability of some numerical schemes for the drift-diffusion system with respect to the Debye length $\lambda$. Therefore, as in the theoretical analysis \cite{JungelPeng2001,Gasser, GLMS}, we will consider a simplified model: we neglect the recombination-generation rate $R$, we assume that the mobilities are constant and equal $\mu_{N}=\mu_{P}=1$ and we also assume that the doping profile $C$ vanishes. 
However,  in Section \ref{sec.numexp}, we will provide some numerical experiments with vanishing and non-vanishing doping profiles. 

 In the sequel, we denote by $(\cal P_{\lambda})$ the simplified model under study in this paper, given by:
\begin{subequations}\label{DD}
\begin{gather}
\pa_t N+\Div (-\nabla N+N\nabla\Psi)=0,\label{DD-N}\\
\pa_t P+\Div (-\nabla P-P\nabla\Psi)=0,\label{DD-P}\\
-\lambda^2 \Delta\Psi=P-N,\label{DD-Psi}
\end{gather}
\end{subequations}
supplemented with boundary conditions \eqref{CL} and initial conditions \eqref{CI}. We need the following assumptions:
\begin{hypotheses}\label{HYP}
The domain $\Omega$ is an open bounded subset of $\R^d$ ($d\geq 1$) and $\pa \Omega= \Gamma^D\cup\Gamma^N$ with $\Gamma^D\cap\Gamma^N=\emptyset$ and ${\rm m}(\Gamma^D)>0$.  The boundary conditions $N^D$, $P^D$ and $\Psi^D$ are the traces of some functions defined on the whole domain $\Omega$, still denoted by $N^D$, $P^D$ and $\Psi^D$. 
Furthermore, we assume that 
\begin{subequations}\label{HYP_data}
\begin{gather}
N_0,P_0\in L^{\infty}(\Omega),\label{HYP-CI}\\
N^D,P^D\in L^{\infty}\cap H^1(\Omega),\  \Psi^D\in H^1(\Omega),\label{HYP-CL}\\
\exists m>0, M >0 \mbox{ such that } m\leq N_0,P_0,N^D,P^D\leq M  \mbox{ a.e. on } \Omega.\label{HYP-mM}
\end{gather}
\end{subequations}

\end{hypotheses}

The weak solution of $(\cal P_{\lambda})$ is defined by: 
$N$, $P \in L^{\infty}(\Omega \times (0,T))$,
$N-N^{D}$, $P-P^{D}$, $\Psi-\Psi^{D} \in L^{\infty}(0,T;V)$,
with $V=\bigl\{v\in H^1(\Omega)\;;\; v=0\mbox{ almost everywhere on } \Gamma^D\bigl\}$
and, for all test functions $\varphi \in C^{\infty}_c(\overline\Omega \times [0,T))$ and $\eta \in C^{\infty}_c(\overline\Omega \times (0,T))$ such that $\varphi(\gamma,t)=\eta(\gamma,t)=0$ for all  $(\gamma,t)\in\Gamma^D\times [0,T)$:
\begin{subequations}
\begin{gather}
\int_{0}^{T}\int_{\Omega}(N\,\pa_{t}\varphi-\nabla N \cdot \nabla \varphi+ N\,\nabla\Psi \cdot \nabla \varphi)\,dx \,dt+\int_{\Omega}N_{0}(x)\,\varphi(x,0)\,dx=0,\label{weaksol.N}\\
\int_{0}^{T}\int_{\Omega}(P\,\pa_{t}\varphi-\nabla P \cdot \nabla \varphi- P\,\nabla\Psi \cdot \nabla \varphi)\,dx \,dt+\int_{\Omega}P_{0}(x)\,\varphi(x,0)\,dx=0,\label{weaksol.P}\\
\lambda^{2} \int_{0}^{T}\int_{\Omega}\nabla \Psi \cdot \nabla \eta\,dx\,dt=\int_{0}^{T}\int_{\Omega}(P-N)\,\eta \,dx\,dt.\label{weaksol.Psi}
\end{gather}
\end{subequations}
The existence of a weak solution to the drift-diffusion system $(\cal P_{\lambda})$ has been proved in \cite{Gajewski1985,Mock}
under hypotheses more restrictive than Hypotheses \ref{HYP} since they consider more regular boundary conditions.
In \cite{Gajewski_Groger}, the authors prove these existence results under Hypotheses \ref{HYP} and assuming that
$\nabla (\log N_D -\Psi_D)$, $\nabla (\log P_D +\Psi_D)$ are in $L^{\infty}(\Omega)$.  

\subsection{The quasi-neutral limit of the drift-diffusion system}

The quasi-neutral limit plays an important role in many physical situations like sheath problems \cite{Franklin}, plasma diode modeling \cite{Sze_etal},  semiconductors \cite{Sze},... Then, it has been studied for different models: see \cite{cordier_grenier,slemrod} for the Euler-Poisson model, \cite{brenier_grenier,grenier} for the Vlasov-Poisson model and \cite{JungelPeng2001,Gasser, GLMS} for the 
drift diffusion-Poisson model.

In these models, the quasi-neutral limit consists in letting the scaled Debye length $\lambda$ tending to zero. From a physical point of view, this means that only the large scale structures with respect to the Debye length are then taken into account. 
Formally, this quasi-neutral limit is obtained by setting $\lambda=0$ in the model, here $\Pl$. 
Then, the Poisson equation \eqref{DD-Psi} on $\Psi$  reduces to the algebraic relation $P-N=0$ (which is the quasi-neutrality relation). But adding and subtracting \eqref{DD-N} and \eqref{DD-P}, we get  new equations on $N$ and $\Psi$. The quasi-neutral system $\Pz$ rewrites finally for all $(x,t)\in \Omega\times [0,T]$:
\begin{subequations}\label{QNC0}
\begin{gather}
\pa_t N -\Delta N =0,\label{QNC0-N}\\
{\rm div}(N\nabla \Psi )=0\label{QNC0-Psi},\\
P=N.\label{QN-P}
\end{gather}
\end{subequations}

In \cite{JungelPeng2001}, A. J\"{u}ngel and Y.-J. Peng performed rigorously the quasi-neutral limit for the drift-diffusion system with a zero doping profile and mixed Dirichlet and homogeneous Neumann boundary conditions. Indeed, under Hypotheses~\ref{HYP} and under quasi-neutrality assumptions on the initial and boundary conditions ($N_0-P_0=0$ and $N^D-P^D=0$), they prove that a weak solution to $\Pl$, denoted by $(N^{\lambda}, P^{\lambda}, \Psi^{\lambda})$, converges, when $\lambda\to 0$, to $(N^0,P^0,\Psi^0)$ solution to $\Pz$ in the following sense:
$$
\begin{gathered}
N^{\lambda} \to N^0,  P^{\lambda} \to P^0 \mbox{ in } L^p(\Omega\times (0,T)) \mbox{ strongly, for all } p\in [1,+\infty),\\
N^{\lambda} \rightharpoonup N^0,  P^{\lambda} \rightharpoonup P^0 ,  \Psi^{\lambda} \rightharpoonup \Psi^0\mbox{ in } L^2(0,T,H^1(\Omega)) \mbox{ weakly}.\\
\end{gathered}
$$
The same kind of result is established for the drift-diffusion system with homogeneous Neumann boundary conditions by I. Gasser in \cite{Gasser} for a zero doping profile  and  by I. Gasser, C.D. Levermore, P. Markowich, C. Schmeiser in \cite{GLMS} for a regular doping profile. In all these papers, the rigorous proof of the quasi-neutral limit is based on an entropy method. 

The entropy method, described for instance in the review paper \cite{arnold_etal_2004}, has been developed in the last twenty years. It is firstly devoted to the study of the long time behavior of some partial differential equations or systems of partial differential equations and to the study of their equilibrium state.  It consists in looking for a nonnegative Lyapunov functional, called entropy, and its nonnegative dissipation, connected within an entropy-entropy production estimate. Generally, it provides the convergence in relative entropy of the evolutive solution towards an equilibrium state. This method has been widely applied to many different systems: see \cite{arnold_etal_2004} and the references therein, but also  \cite{jungel_M3AS_1995, gajewski_gartner_1996, desvillettes_fellner_2006, glitzky_hunlich_2008, glitzky_gartner_2009}...

However, the entropy method also permits to get new a priori estimates on systems of partial differential equations {\em via} a bound on the entropy production, see \cite{JungelPeng2001, Gasser, GLMS} for instance. In the case of Dirichlet-Neumann boundary conditions, the entropy functional, which has the physical meaning of a free energy, is defined (see \cite{JungelPeng2001}) by

$$\displaylines{
{\mathbb E}(t)=\ds\int_{\Omega} \biggl(H(N)-H(N^D)-\log(N^D)(N-N^D)\hfill\cr
\hfill\vspace{-1.9cm}+H(P)-H(P^D)-\log(P^D)(P-P^D)+ \ds\frac{\lambda^2}{2}\vert \nabla \Psi-\nabla\Psi^D\vert^2 \biggl)dx,}
$$
with $H(x)=\ds\int_1^x \log (t)\  dt= x\log x -x +1$, and the entropy production functional is defined by
$$
{\mathbb I}(t)= \int_{\Omega} \left(N\left\vert\nabla (\log N -\Psi)\right\vert^2+P\left\vert\nabla (\log P +\Psi)\right\vert^2\right) dx dt.
$$
The entropy-entropy production inequality writes:
\begin{equation}\label{ineg-entr-cont}
\ds\frac{d\mathbb E}{dt}(t)+ \frac{1}{2}{\mathbb I}(t)\leq K_D\quad \forall t\geq 0,
\end{equation}
where $K_D$ is a constant depending only on data.
This inequality is crucial in order to perform rigorously the quasi-neutral limit. Indeed, if $ {\mathbb E}(0)$ is uniformly bounded in $\lambda$, \eqref{ineg-entr-cont}  provides a uniform bound on $\int_0^T {\mathbb I}(s) ds$. It implies a priori uniform bounds on $(N^{\lambda}, P^{\lambda}, \Psi^{\lambda})$ solution to $\Pl$ and therefore compactness of a sequence of solutions.

\subsection{Presentation of the numerical method}\label{sec.presscheme}

In order to introduce the numerical scheme for the drift-diffusion system $\Pl$, first, we define the mesh of the  domain $\Omega$. Here, we consider the two-dimensional case but generalization to higher dimensions is straightforward. The mesh  $\cal M=(\T,\E,\cal P)$ is given by $\mathcal T$, a family of open polygonal control volumes, $\mathcal E$, a family of edges and ${\mathcal P}=(x_K)_{K\in\mathcal T}$ a family of points. As it is classical in the finite volume discretization of elliptic or parabolic equations with a two-points flux approximations, we assume that the mesh is admissible in the sense of \cite{EGHbook} (Definition 9.1). It implies that the straight line between two neighboring centers of cell $(x_K,x_L)$ is orthogonal to the edge $\sigma=K|L$ (and therefore collinear to $\nKsig$, the unit normal to $\sigma$ outward to $K$). 

We distinguish in $\E$ the interior edges, $\sigma =K|L$, from the exterior edges, $\sigma\subset \partial\Omega$. Therefore $\E$ is  split into $\E=\E_{int}\cup {\mathcal E}_{ext}$. Within the exterior edges, we distinguish the edges included in $\Gamma^D$ from the edges included in $\Gamma^N$: ${\mathcal E}_{ext}={\mathcal E}_{ext}^D\cup {\mathcal E}_{ext}^N$. For a given control volume $K\in{\mathcal T}$, we define ${\mathcal E}_K$ the set of its edges, which is also split into ${\mathcal E}_K={\mathcal E}_{K,int}\cup{\mathcal E}_{K,ext}^D\cup{\mathcal E}_{K,ext}^N$. For each edge $\sigma\in\E$, there exists at least one cell $K\in\T$ such that $\sigma\in\E_K$. Then, we can denote this cell $K_{\sigma}$. In the case where $\sigma$ is an interior edge ($\sigma=K|L$), $K_{\sigma}$ can be either equal to $K$ or to $L$. 

For all edges $\sigma\in{\mathcal E}$, we define ${\rm d}_{\sigma}={\rm d}(x_K,x_L)$ if $\sigma=K|L\in{\mathcal E}_{int}$ and ${\rm d}_{\sigma}={\rm d}(x_K,\sigma)$ if $\sigma\in{\mathcal E}_{ext}$ with $\sigma\in \E_K$. Then, the transmissibility coefficient is defined by $\tau_{\sigma}={\rm m}(\sigma)/{\rm d}_{\sigma}$, for all $\sigma\in{\mathcal E}$.
We assume that the mesh satisfies the following regularity constraint:
\begin{equation}\label{reg-mesh} 
\exists \xi >0 \mbox{ such that } {\rm d}(x_K,\sigma)\geq \xi \, \mbox{diam}(K),\quad \forall K\in\T, \forall \sigma\in\E_K.
\end{equation}
Let $\beta>0$ be such that $\mbox{card}(\E_K)\leq \beta$ for all $K\in\T$.
Let $\Delta t>0$ be the time step. We set $N_T=E(T/\Delta t)$ and $t^n=n\Delta t$ for all $0\leq n\leq N_T$. The size of the mesh is defined by $\mbox{size }(\T)=\max_{K\in\T}\mbox{diam }(K)$ with
$\mbox{diam}(K)=\sup_{x,y\in K}|x-y|$, for all $K\in\T$. We denote by $\delta=\max(\Delta t, \mbox{size }(\T))$ the size of the space-time discretization. Per se, a finite volume scheme for a conservation law with unknown $u$ provides a vector $u_\T=(u_K)_{K\in\T}\in\R^{\theta}$ (with $\theta={\rm Card}(\T)$)  of approximate values and the associate piecewise constant function, still denoted $u_\T$: 
\begin{equation}\label{defut}
u_\T=\ds\sum_{K\in\T} u_K {\mathbf 1}_K, 
\end{equation}
where ${\mathbf 1}_K$ denotes the characteristic function of the cell $K$. However, since there are  Dirichlet boundary conditions on a part of the boundary, we need to define approximate values for $u$ at the corresponding boundary edges: $u_{\E^D}=(u_{\sigma})_{\sigma\in\E_{ext}^D}\in\R^{\theta^D}$ (with $\theta^D={\rm Card}(\E_{ext}^D)$). Therefore, the vector containing the approximate values in the control volumes and the approximate values at the boundary edges is denoted by  $u_{\cal M}= (u_\T,u_{\E^D})$.

For any vector $u_{\cal M}= (u_\T,u_{\E^D})$, we define, for all $K\in\T$, for all $\sigma\in\E_K$,
\begin{subequations}\label{valeurs_edge}
\begin{align}
&u_{K,\sigma}=\left\{\begin{array}{ll}
u_L,& \mbox{ if } \sigma=K|L\in {\mathcal E}_{K,int},\\
u_{\sigma},& \mbox{ if } \sigma\in {\mathcal E}_{K,ext}^D,\\
u_K,& \mbox{ if } \sigma\in {\mathcal E}_{K,ext}^N,
\end{array}\right.\label{def_uks}\\
&Du_{K,\sigma}=u_{K,\sigma}-u_K \quad \mbox{ and } \quad D_{\sigma}u=\left| Du_{K,\sigma}\right|\label{def_duks}.
\end{align}
\end{subequations}
We also define the discrete $H^1$- semi-norm $|\cdot |_{1,\cal M}$ on the set of approximations  by 
$$
\left| u_{\cal M}\right|^{2}_{1,\cal M}=\sum_{\sigma \in \mathcal{E}}\tau_{\sigma}\,\left(D_{\sigma}u\right)^{2},\quad \forall u_{\cal M}=(u_\T,u_{\E^D}).
$$
As we deal in this paper with a space-time system of equations $\Pl$, we define at each time step, $0\leq n\leq N_T$, the approximate solution $u_\T^n= (u_K^n)_{K\in\T}$ for $u=N,P,\Psi$ and the approximate values at the boundary $u_{\E^D}^n=(u_\sigma^n)_{\sigma\in\E_{ext}^D}$ (which in fact does not depend on $n$ since the boundary data do not depend on time). 
Now, let us present the scheme that will be studied in the sequel. First, we discretize the initial and the boundary conditions. We set 
\begin{equation}\label{CInum}
\Bigl(N_K^0,P_K^0\Bigl)=\ds\frac{1}{{\rm m}(K)}\int_K \Bigl(N_0(x),P_0(x)\Bigl)\, dx, \quad \forall K\in\T,
\end{equation}
$$
\Bigl(N_{\sigma}^D,P_{\sigma}^D,\Psi_{\sigma}^D\Bigl)= \ds\frac{1}{{\rm m}(\sigma)}\int_\sigma \Bigl(N^D(\gamma),P^D(\gamma),\Psi^D(\gamma)\Bigl) d\gamma,\quad
 \quad  \forall \sigma \in {\mathcal E}_{ext}^D.
$$
and we define
\begin{equation}\label{CL_discretes}
N_{\sigma}^n= N_{\sigma}^D,\quad 
P_{\sigma}^n= P_{\sigma}^D,\quad
\Psi_{\sigma}^n= \Psi_{\sigma}^D, \quad  \forall \sigma \in {\mathcal E}_{ext}^D, \forall n\geq 0.
\end{equation}
This means that $N^n_{\E^D}=N^D_{\E^D}$ for all $n\geq 0$.

We consider a Euler implicit in time and finite volume in space discretization.  The scheme  writes:
\begin{subequations}\label{scheme}
\begin{align}
&\m (K)\ds\frac{N_K^{n+1}-N_K^n}{\Delta t}+\ds\sum_{\sigma\in {\mathcal E}_K}{\mathcal F}_{K,\sigma}^{n+1}=0,\quad \forall K\in{\mathcal T}, \forall n\geq 0,\label{scheme-N}\\
&\m (K)\ds\frac{P_K^{n+1}-P_K^n}{\Delta t}+\ds\sum_{\sigma\in {\mathcal E}_K}{\mathcal G}_{K,\sigma}^{n+1}=0,\quad \forall K\in{\mathcal T}, \forall n\geq 0,\label{scheme-P}\\
&-\lambda^2\ds\sum_{\sigma\in {\mathcal E}_K}\tau_{\sigma} D\Psi_{K,\sigma}^{n}=\m (K) (P_K^{n}-N_K^{n}),\quad \forall K\in{\mathcal T}, \forall n\geq 0.\label{scheme-Psi}
\end{align}
\end{subequations}
It remains to define the numerical fluxes  ${\mathcal F}_{K,\sigma}^{n+1}$ and ${\mathcal G}_{K,\sigma}^{n+1}$ which can be seen  respectively as  numerical approximations of $\ds\int_{\sigma} (-\nabla N+N\nabla\Psi)\cdot \nKsig$ and $\ds\int_{\sigma} (-\nabla P-P\nabla\Psi)\cdot \nKsig$ on the interval $[t^n,t^{n+1})$.  We choose to discretize simultaneously the diffusive part and the convective part of the fluxes, by using the Scharfetter-Gummel fluxes. For all $K\in\T$, for all $\sigma\in\E_K$, we set: 
\begin{subequations}\label{FLUX-SG}
\begin{align}
&\label{FLUX-SG-N}
{\mathcal F}_{K,\sigma}^{n+1}=
\tau_{\sigma}\left( B(-D\Psi_{K,\sigma}^{n+1})N_K^{n+1}-B(D\Psi_{K,\sigma}^{n+1})N_{K,\sigma}^{n+1} \right),\\
&\label{FLUX-SG-P}
{\mathcal G}_{K,\sigma}^{n+1}=
\tau_{\sigma}\left( B(D\Psi_{K,\sigma}^{n+1})P_K^{n+1}-B(-D\Psi_{K,\sigma}^{n+1})P_{K,\sigma}^{n+1} \right),
\end{align}
\end{subequations}
where $B$ is the Bernoulli function defined by 
\begin{equation}\label{Bern}
B(0)=1 \mbox{ and } B(x)=\ds\frac{x}{\exp (x) -1}\  \forall x\neq 0.
\end{equation}

These fluxes have been introduced by A. M. Il'in in \cite{ilin} and D. L. Scharfetter and H.~K.~Gummel in \cite{SG} for the numerical approximation of convection-diffusion terms with linear diffusion.  It has been established by R.~Lazarov, I.~Mishev and P.~Vassilevsky in \cite{LMV} that they are second-order accurate in space. Moreover, they preserve steady-states. Dissipativity of the Scharfetter-Gummel scheme with a backward Euler time discretization was proved in \cite{gajewski_gartner_1996}. A  proof of the exponential decay of the free energy along trajectories towards the thermodynamic equilibrium on boundary conforming Delaunay grids was also given by A.~Glitzky in \cite{glitzky08,glitzky11}. In \cite{gartner09}, K.~G\"artner establishes some bounds for discrete steady states solutions obtained with the Scharfetter-Gummel scheme. Moreover, M.~Chatard proved in \cite{Marianne-FVCA6} a discrete entropy estimate, with control of the entropy production, which yields the long-time behavior of the Scharfetter-Gummel scheme for the drift-diffusion system. The generalization of the Scharfetter-Gummel fluxes to nonlinear diffusion has been studied by A.~J\"ungel and P.~Pietra in \cite{JungelPietra}, R.~Eymard, J.~Fuhrmann and K.~G\"{a}rtner in \cite{EFG} and M.~Bessemoulin-Chatard in \cite{MarianneSG}.

\begin{remark}
Let us note that the definition \eqref{valeurs_edge} ensures that $D\Psi_{K,\sigma}^{n+1}= 0$ and also that  ${\mathcal F}_{K,\sigma}^{n+1}={\mathcal G}_{K,\sigma}^{n+1}=0$,   for all $\sigma\in\E_{K,ext}^N$. These relations are  consistent with the Neumann boundary conditions \eqref{CL_neum}. 
\end{remark}
\smallskip

In the sequel, we denote by $\Sl$ the scheme \eqref{CInum}--\eqref{Bern}. It is a fully implicit in time scheme:  the numerical solution $(\NKnp,\PKnp,\Psi_K^{n+1})_{K\in\T}$ at each time step is defined as  a solution of the nonlinear system of equations \eqref{scheme}--\eqref{FLUX-SG}. When choosing $D\Psi_{K,\sigma}^n$ instead of $D\Psi_{K,\sigma}^{n+1}$ in the definition of the fluxes \eqref{FLUX-SG}, we would get a decoupled scheme whose solution is obtained by solving successively three linear systems of equations for $N$, $P$ and $\Psi$. However, this other choice of time discretization used in \cite{CCHLP,CCHP} induces a stability condition of the form
$\Delta t\leq C \lambda^2$ (see for instance \cite{corrosion}). Therefore, it cannot be used in practice for small values of $\lambda$ and it does not preserve the quasi-neutral limit.

Setting $\lambda=0$ in the scheme $\Sl$ leads to the scheme $\Sz$ defined in the following. The scheme for the Poisson equation \eqref{scheme-Psi} becomes $P_{K}^n-N_{K}^n=0$ for all $K\in\T$, $n\in\N$. In order to avoid any incompatibility condition at $n=0$ (which would correspond to an initial layer), we assume that the initial  conditions $N_0$ and $P_0$ satisfy the quasi-neutrality assumption:
\begin{equation}\label{QN-CI}
P_0-N_0=0.
\end{equation}
Adding and subtracting \eqref{scheme-N} and \eqref{scheme-P}, and using $P_{K}^n=N_{K}^n$ for all $K\in\T$ and  $n\in\N$, we get   
$$
\begin{aligned}
\m(K) \ds\frac{N_K^{n+1}-N_K^n}{\Delta t}+\frac{1}{2}\ds\sum_{\sigma\in {\mathcal E}_K}\left(\FKsig^{n+1}+\GKsig^{n+1}\right)=0, \forall K\in\T, \forall n\geq 0,\\
\mbox{ and }\sum_{\sigma\in\E_K} \left(\FKsig^{n+1}-\GKsig^{n+1}\right)=0, \forall K\in\T, \forall n\geq 0.
\end{aligned}
$$
But, using the following property of the Bernoulli function 
\begin{equation}\label{propBern}
B(x)-B(-x)=-x \quad \forall x\in\R,
\end{equation}
we have,  $\forall K\in\T, \forall \sigma\in\E_{K,int}\cup \E_{K,ext}^N$:
$$
\begin{aligned}
&\FKsig^{n+1}-\GKsig^{n+1}=\ts D\Psi_{K,\sigma}^{n+1}(N_K^{n+1}+N_{K,\sigma}^{n+1}), \\
\mbox{ and } \quad&\FKsig^{n+1}+\GKsig^{n+1}=-\ts \left(B(D\Psi_{K,\sigma}^{n+1})+B(-D\Psi_{K,\sigma}^{n+1})\right)DN_{K,\sigma}^{n+1}.
\end{aligned}
$$
Let us note that these equalities still hold for each Dirichlet boundary edge $\sigma\in\E_{K,ext}^D$ if $N_\sigma^D=P_\sigma^D$. In the sequel, when studying the scheme at the quasi-neutral limit $\Sz$, we assume  the quasi-neutrality of the initial conditions \eqref{QN-CI} and of the boundary conditions:
\begin{equation}\label{QN-CL}
P^D-N^D=0.
\end{equation}
Finally, the scheme $\Sz$ can be rewritten: $\forall K\in\T$, $\forall n\geq 0$,
\begin{subequations}\label{Sz}
\begin{gather}
{\m} (K)\ds\frac{N_K^{n+1}-N_K^n}{\Delta t}-\ds\sum_{\sigma\in {\mathcal E}_K}\ts \frac{B(D\Psi_{K,\sigma}^{n+1})+B(-D\Psi_{K,\sigma}^{n+1})}{2}DN_{K,\sigma}^{n+1}=0,\label{Sz-N}\\
-\sum_{\sigma\in\E_K}\ts D\Psi_{K,\sigma}^{n+1}(N_K^{n+1}+N_{K,\sigma}^{n+1})=0,\label{Sz-Psi}\\
P_{K}^n-N_{K}^n=0,\label{Sz-P}
\end{gather}
\end{subequations}
with the initial conditions \eqref{CInum} and the boundary conditions \eqref{CL_discretes}.

\subsection{Main results and outline of the paper}

The scheme $\Sl$ is implicit in time. Then we begin by proving that the nonlinear system of equations \eqref{scheme}  admits a solution at each time step.  The proof of this result is based on the application of Brouwer's fixed point theorem. 
The existence result is given in Theorem \ref{thm-ex}
and is proved in Section \ref{sec.ex}.

\begin{theorem}[Existence of a solution to the numerical scheme]\label{thm-ex}
~

We assume Hypotheses \ref{HYP}, let $\T$ be an admissible mesh of $\Omega$ satisfying \eqref{reg-mesh}  and $\Delta t>0$. If $\lambda=0$, we further assume the quasi-neutrality of the initial and boundary conditions \eqref{QN-CI} and \eqref{QN-CL}. Then, for all $\lambda\geq 0$, there exists a solution to the scheme $\Sl$: $(N_K^n, P_K^n, \Psi_K^n)_{K\in\T}\in(\R^{\theta})^3$ for all $n\geq 0$.  Moreover, the approximate densities satisfy the following $L^{\infty}$ estimate:
\begin{equation}\label{estLinf}
\forall K\in\T, \forall n\geq 0,\   m\leq N_K^n, P_K^n\leq M.
\end{equation}
\end{theorem}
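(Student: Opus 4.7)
I would proceed by induction on $n$. The base case $n=0$ is immediate: since $N_0, P_0 \in [m,M]$ a.e.\ by \eqref{HYP-mM}, the cell averages \eqref{CInum} satisfy $m \leq N^0_K, P^0_K \leq M$. For the inductive step, given $m \leq N^n_K, P^n_K \leq M$, I must construct $(N^{n+1}_K, P^{n+1}_K, \Psi^{n+1}_K)_{K\in\T}$ solving \eqref{scheme}--\eqref{Bern} with the same bound.

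Existence I would establish by Brouwer's fixed-point theorem. A natural fixed-point map $T$ sends $(N,P)$ to $(\tilde N, \tilde P)$ by: (i) solving the discrete Poisson equation \eqref{scheme-Psi} for a unique $\Psi$, whose matrix is symmetric positive definite since $\m(\Gamma^D)>0$ (for $\lambda=0$, use instead the reformulation \eqref{Sz-Psi}, a weighted discrete Laplacian with strictly positive weights $N_K+N_{K,\sigma}$); (ii) substituting this $\Psi$ into \eqref{FLUX-SG} and solving the two decoupled linear systems \eqref{scheme-N}--\eqref{scheme-P} for $\tilde N$ and $\tilde P$, each of which is governed by a nonsingular $M$-matrix (positive diagonal, non-positive off-diagonal by $B>0$, irreducibly diagonally dominant thanks to Dirichlet contributions). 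The resulting map $T$ is continuous, and any fixed point is a solution of the scheme.

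The discrete $L^{\infty}$ bound---which both closes the hypothesis $T([m,M]^{2\theta})\subset[m,M]^{2\theta}$ of Brouwer and yields \eqref{estLinf}---comes from a coupled discrete maximum principle. Suppose for contradiction $N^{n+1}_{K_0}=\max_K N^{n+1}_K>M$. Rewriting \eqref{scheme-N} at $K_0$ as
\begin{equation*}
N^{n+1}_{K_0}\biggl[\frac{\m(K_0)}{\Delta t}+\sum_{\sigma\in\E_{K_0}}\tau_\sigma B(-D\Psi^{n+1}_{K_0,\sigma})\biggr]=\frac{\m(K_0)}{\Delta t}\,N^n_{K_0}+\sum_{\sigma\in\E_{K_0}}\tau_\sigma B(D\Psi^{n+1}_{K_0,\sigma})\,N^{n+1}_{K_0,\sigma},
\end{equation*}
and using $N^n_{K_0}\leq M$, $N^D_\sigma\leq M<N^{n+1}_{K_0}$ on Dirichlet edges (strict, and at least one such edge exists since $\m(\Gamma^D)>0$ and $B>0$), together with $N^{n+1}_{K_0,\sigma}\leq N^{n+1}_{K_0}$ on interior edges, the identity $B(-x)-B(x)=x$ from \eqref{propBern} forces $\sum_\sigma \tau_\sigma D\Psi^{n+1}_{K_0,\sigma}<0$ strictly. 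The discrete Poisson equation \eqref{scheme-Psi} at $K_0$ then gives $P^{n+1}_{K_0}>N^{n+1}_{K_0}$, hence $\max_K P^{n+1}_K>\max_K N^{n+1}_K$. Running the symmetric analysis at the maximum of $P^{n+1}$ yields the reverse strict inequality, a cyclic contradiction. The lower bound $N^{n+1}, P^{n+1}\geq m$ is obtained analogously. For $\lambda=0$, equation \eqref{Sz-N} is a symmetric $M$-matrix problem for $N$ (given $\Psi$), to which the classical discrete maximum principle applies directly, with $P=N$ by \eqref{Sz-P}.

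The main obstacle lies in the coupled nature of the maximum principle: the $N$-equation with $\Psi$ fixed is not on its own a weakly diagonally dominant system, since the row sums of its matrix involve $\sum_\sigma \tau_\sigma D\Psi_{K,\sigma}$, which a priori has no fixed sign. The Poisson equation must be invoked at the extremum to convert this drift term into a pointwise $N-P$ comparison, and the argument must couple $N$ and $P$ simultaneously through the cyclic extremum analysis above. The degenerate case $\lambda=0$ is structurally cleaner and handled via the symmetric reformulation \eqref{Sz}.
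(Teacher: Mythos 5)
Your a priori maximum principle for solutions of the nonlinear scheme is correct and is a genuinely nice observation: at a maximizer $K_0$ of $N^{n+1}$ exceeding $M$, the flux balance forces $\sum_{\sigma}\tau_\sigma D\Psi^{n+1}_{K_0,\sigma}<0$, the Poisson equation converts this into $P^{n+1}_{K_0}>N^{n+1}_{K_0}$, and the symmetric argument at the maximizer of $P^{n+1}$ gives the cyclic contradiction. (A small slip: you neither need nor may assert that $K_0$ touches $\Gamma^D$; the strictness already comes from $N^n_{K_0}\le M<N^{n+1}_{K_0}$.) The gap is that this bound applies only to a \emph{fixed point} of your map $T$, because the Poisson step of $T$ uses the \emph{input} pair $(N_\T,P_\T)$ while the maximum principle concerns the \emph{output} $(\widehat N_\T,\widehat P_\T)$. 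At a maximizer of $\widehat N_\T$ you only learn that $P_{K_0}>N_{K_0}$ for the inputs, which is perfectly compatible with $(N_\T,P_\T)\in\mathcal{C}_{m,M}$; so the claimed invariance $T(\mathcal{C}_{m,M})\subset\mathcal{C}_{m,M}$ is not established. Indeed the computation of $\bigl({\mathbb A}_{N}(\widehat{N}_\T-\mathbf{M}_\T)\bigr)_K$ for your map leaves an uncontrolled term $M\,\m(K)(P_{K}-N_{K})/\lambda^{2}$, which is precisely the obstruction behind the $\Delta t\leq C\lambda^2$ constraint of the decoupled scheme recalled in Section~\ref{sec.presscheme}. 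Hence Brouwer's theorem cannot be invoked on the box with the map as you define it, and the existence part of the argument is incomplete for $\lambda>0$.

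The paper closes exactly this gap by modifying the linearized map: following Prohl and Schmuck, it adds a relaxation term $\frac{\m(K)}{\Delta t}\frac{\mu}{\lambda^{2}}(\widehat N_K-N_K)$ (and similarly for $P$) to the linearized continuity equations. This term vanishes at a fixed point, so fixed points still solve the scheme, but for $\mu\geq M\Delta t$ it dominates the bad Poisson contribution and yields ${\mathbb A}_{N}(\widehat{N}_\T-\mathbf{M}_\T)\leq 0$ and ${\mathbb A}_{N}(\widehat{N}_\T-\mathbf{m}_\T)\geq 0$ for \emph{every} input in $\mathcal{C}_{m,M}$, which is the set invariance Brouwer requires. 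To repair your proof you would either need this device, or replace Brouwer by a homotopy/degree argument in which your a priori bound on solutions of the whole deformed family supplies the required uniform estimate; as written, neither is in place. Your base step, your treatment of the M-matrix structure, and your handling of the case $\lambda=0$ (where the reformulated system \eqref{Sz} has no drift term in the $N$-equation and the box is preserved directly) are in line with the paper.
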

Then, in Section \ref{sec.entropy}, we prove the discrete counterpart of the entropy-dissipation inequality \eqref{ineg-entr-cont}.  As the functions $N^D$, $P^D$, $\Psi^D$ are given on the whole domain, we can set:
$$
\Bigl(N_{K}^D,P_{K}^D,\Psi_{K}^D\Bigl)= \ds\frac{1}{{\rm m}(K)}\int_K \Bigl(N^D(x),P^D(x),\Psi^D(x)\Bigl) dx,\quad
 \ \   \forall K \in \T.
$$
For all $n\in{\mathbb N}$, the discrete  entropy functional is  defined by: 
\begin{multline*}
\mathbb E^{n} = \sum_{K \in \mathcal{T}}\text{m}(K) \left(H(N^{n}_{K})-H(N^{D}_{K})-\log(N^{D}_{K})\left(N^{n}_{K}-N^{D}_{K}\right)\right)\\
 + \sum_{K \in \mathcal{T}}\text{m}(K) \left(H(P^{n}_{K})-H(P^{D}_{K})-\log(P^{D}_{K})(P^{n}_{K}-P^{D}_{K})\right)+ \frac{\lambda^{2}}{2}\left|\Psi_{\cal M}^{n}-\Psi_{\cal M}^{D}\right|^{2}_{1,{\cal M}},
\end{multline*}
and the discrete entropy production is defined by 
$$\begin{array}{l}
\ds\mathbb I^{n} = \somsig\tau_{\sigma}\Biggl[
\min\left(N_{K}^{n},N_{K,\sigma}^{n}\right)\Bigl(D_\sigma\left(\log N^n-\Psi^n\right)\Bigl)^{2}\hspace{3cm}~\\
[-15pt]\ds\hspace{5cm}+\min\left(P_{K}^{n},P_{K,\sigma}^{n}\right)\Bigl(D_\sigma\left(\log P^n+\Psi^n \right)\Bigl)^{2}\Biggl],
\end{array}
$$
where the notation $\somsig$ means a sum over all the edges $\sigma\in\E$ and $K$ inside the sum is replaced by $K_{\sigma}$ (therefore, $\sigma$ is an edge of the cell $K=K_{\sigma}$). 

The discrete counterpart of \eqref{ineg-entr-cont} is given in Theorem \ref{thm-entropy}
\begin{theorem}[Discrete entropy-dissipation inequality]\label{thm-entropy}
~

We assume Hypotheses \ref{HYP}, let $\T$ be an admissible mesh of $\Omega$ satisfying \eqref{reg-mesh}  and $\Delta t>0$. Then, there exists $K_E$, depen\-ding only on $\Omega$, $T$, $m$, $M$, $N^D$, $P^D$, $\Psi^D$, $\beta$  and $\xi$  such that, for all  $\lambda\geq 0$, a solution to the scheme $\Sl$,  $(N_\T^{n},P_\T^n,\Psi_\T^n)_{0\leq n\leq N_T}$, satisfies the following inequality: 
\begin{subequations}\label{entr-dissip}
\begin{equation}\label{entropy-estimate}
\ds\frac{\mathbb E ^{n+1}-\mathbb E ^{n}}{\Delta t}+\frac{1}{2}\, \mathbb I^{n+1} \leq K_E,\quad \forall n\geq 0.
\end{equation}
Furthermore, if $N^0$ and $P^0$ satisfy the quasi-neutrality assumption \eqref{QN-CI}, we have 
\begin{equation}\label{diss-entropy-estimate}
 \sum_{n=0}^{N_T-1} \Delta t \,  \mathbb I^{n+1}\leq K_E(1+\lambda^2).
\end{equation}
 \end{subequations}
\end{theorem}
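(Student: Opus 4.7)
The plan is to transpose to the discrete setting the continuous entropy computation of J\"ungel--Peng \cite{JungelPeng2001}: I test \eqref{scheme-N} against $\log N^{n+1}-\log N^D$, \eqref{scheme-P} against $\log P^{n+1}-\log P^D$, and the difference of \eqref{scheme-Psi} at levels $n$ and $n+1$ against $\Psi^{n+1}-\Psi^D$. By convexity of $H$, for all $a,b,c>0$ one has $H(b)-H(a)-\log(c)(b-a)\leq(\log b-\log c)(b-a)$; applied with $(a,b,c)=(N_K^n,N_K^{n+1},N_K^D)$, multiplied by $\m(K)/\Delta t$, summed over $K$, combined with \eqref{scheme-N}, and reorganized by a discrete summation by parts (valid because $\FKsig^{n+1}+\mathcal F_{L,\sigma}^{n+1}=0$ on $\Eint$, $\FKsig^{n+1}=0$ on $\EextN$, and $N_\sigma^{n+1}=N_\sigma^D$ on $\EextD$), this yields $(\mathbb E_N^{n+1}-\mathbb E_N^n)/\Delta t\leq\sum_\sigma D(\log N^{n+1}-\log N^D)_{K,\sigma}\,\FKsig^{n+1}$, and similarly for $\mathbb E_P$. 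The elementary identity $\tfrac12(b^2-a^2)\leq b(b-a)$ combined with the differenced Poisson equation yields the analogue $(\mathbb E_\Psi^{n+1}-\mathbb E_\Psi^n)/\Delta t\leq\sum_\sigma D(\Psi^{n+1}-\Psi^D)_{K,\sigma}(\GKsig^{n+1}-\FKsig^{n+1})$.

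I next decompose $\log N-\log N^D=(\phi-\phi^D)+(\Psi-\Psi^D)$ with $\phi=\log N-\Psi$, $\phi^D=\log N^D-\Psi^D$, and symmetrically $\log P-\log P^D=(\psi-\psi^D)-(\Psi-\Psi^D)$ with $\psi=\log P+\Psi$. All the $D(\Psi^{n+1}-\Psi^D)$ contributions cancel exactly across the three inequalities above, leaving
$(\mathbb E^{n+1}-\mathbb E^n)/\Delta t\leq \sum_\sigma\bigl(D\phi^{n+1}_{K,\sigma}\FKsig^{n+1}+D\psi^{n+1}_{K,\sigma}\GKsig^{n+1}\bigr)-\sum_\sigma\bigl(D\phi^D_{K,\sigma}\FKsig^{n+1}+D\psi^D_{K,\sigma}\GKsig^{n+1}\bigr)$.

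The first bracket is the discrete dissipation. Using $B(-x)=e^xB(x)$, one rewrites $-\FKsig^{n+1}/\ts=B(-\DPsiKs^{n+1})N_K^{n+1}(e^{D\phi^{n+1}_{K,\sigma}}-1)$. With the integral representations $1/B(x)=\int_0^1 e^{xs}\,ds$ and $(e^v-1)/v=\int_0^1 e^{vs}\,ds$, the ratio $-\FKsig^{n+1}/(\ts\, D\phi^{n+1}_{K,\sigma})$ equals $N_K^{n+1}\,E_\rho[e^{\mu s}]$, where $\mu=D\phi^{n+1}_{K,\sigma}+\DPsiKs^{n+1}=\log(N_{K,\sigma}^{n+1}/N_K^{n+1})$ and $\rho$ is an explicit probability density on $[0,1]$. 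The bound $E_\rho[e^{\mu s}]\geq\min(1,e^\mu)$ (immediate from $e^{\mu s}\geq 1$ if $\mu\geq 0$ and $e^{\mu s}\geq e^\mu$ if $\mu\leq 0$, since $s\in[0,1]$), together with $N_K^{n+1}\min(1,e^\mu)=\min(N_K^{n+1},N_{K,\sigma}^{n+1})$, then yields the pointwise Scharfetter--Gummel dissipation $-\FKsig^{n+1}\cdot D\phi^{n+1}_{K,\sigma}\geq \ts\min(N_K^{n+1},N_{K,\sigma}^{n+1})(D\phi^{n+1}_{K,\sigma})^2$, and an analogue for $\GKsig^{n+1},\psi^{n+1}$. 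After summation (accounting for the double-count of interior edges in the definition of $\mathbb I^n$) the first bracket is $\leq-\tfrac12\mathbb I^{n+1}$.

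The second bracket is handled by Young's inequality. Using the alternative representation $\FKsig^{n+1}=-\ts B(\DPsiKs^{n+1})DN^{n+1}_{K,\sigma}+\ts\DPsiKs^{n+1}N_K^{n+1}$ (from $B(-x)-B(x)=x$), the $L^\infty$ bound of Theorem~\ref{thm-ex}, and the $H^1$-regularity of $N^D,P^D,\Psi^D$, a careful Young's inequality absorbs this bracket as $\tfrac14\mathbb I^{n+1}+K_E$, with $K_E$ depending only on $\Omega,T,m,M,N^D,P^D,\Psi^D,\beta,\xi$ and, crucially, $\lambda$-independent; this gives \eqref{entropy-estimate}. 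For \eqref{diss-entropy-estimate}, summing \eqref{entropy-estimate} over $n$ and using $\mathbb E^{N_T}\geq 0$ yields $\sum\Delta t\,\mathbb I^{n+1}\leq 2K_E T+2\mathbb E^0$; under \eqref{QN-CI}, the right-hand side of the Poisson scheme at $n=0$ vanishes, so testing against $\Psi^0-\Psi^D$ gives $\lambda^2|\Psi^0-\Psi^D|_{1,\cal M}^2\leq\lambda^2|\Psi^D|_{1,\cal M}^2$, whence $\mathbb E^0\leq C(1+\lambda^2)$. The main obstacle is the sharpness of the Scharfetter--Gummel dissipation inequality (producing $\min(N_K,N_{K,\sigma})$, not a coefficient degenerating with $|\DPsiKs|$) and the careful Young absorption in the data bracket: both hinge on the special SG structure and are what enable the $\lambda$-uniformity that is the whole point of the theorem.
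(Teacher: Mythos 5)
Your proof is correct and follows the same overall architecture as the paper's: test the density equations against $\log N^{n+1}-\log N^D$ and $\log P^{n+1}-\log P^D$, test the time-differenced Poisson equation against $\Psi^{n+1}-\Psi^D$, observe that the $D(\Psi^{n+1}-\Psi^D)$ contributions cancel so that only the quasi-Fermi variables $\log N-\Psi$ and $\log P+\Psi$ survive, absorb the boundary-data terms by Young's inequality, and bound $\mathbb E^0$ under quasi-neutrality by testing the Poisson scheme at $n=0$. Where you genuinely diverge is in the proof of the key Scharfetter--Gummel dissipation inequality $-\FKsig^{n+1}\,D(\log N-\Psi)^{n+1}_{K,\sigma}\geq \ts\min(N_K^{n+1},N_{K,\sigma}^{n+1})\bigl(D_\sigma(\log N-\Psi)^{n+1}\bigr)^2$: the paper establishes this (Proposition A.1 and its Corollary, inequalities \eqref{maj1}--\eqref{majabs2}) by rewriting the flux in the two forms \eqref{fluxpos}--\eqref{fluxneg}, using $B(\log y-\log x)=x(\log y-\log x)/(y-x)$, the monotonicity of $B$, and a case analysis on the signs of $DN_{K,\sigma}$ and $D(\log N-\Psi)_{K,\sigma}$; you instead write $-\FKsig^{n+1}/\ts=B(-D\Psi^{n+1}_{K,\sigma})N_K^{n+1}(e^{D\phi}-1)$ and use the integral representations $1/B(x)=\int_0^1e^{xs}\,ds$ and $(e^v-1)/v=\int_0^1e^{vs}\,ds$ to express the flux-to-increment ratio as a probability average $N_K^{n+1}\,E_\rho[e^{\mu s}]$ with $\mu=\log(N_{K,\sigma}^{n+1}/N_K^{n+1})$, from which both the lower bound $\min(N_K^{n+1},N_{K,\sigma}^{n+1})$ and (via $E_\rho[e^{\mu s}]\leq\max(1,e^\mu)$) the upper bound $\max(N_K^{n+1},N_{K,\sigma}^{n+1})$ follow at once. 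This is cleaner and avoids the paper's case distinctions, at the cost of being specific to the Bernoulli function, whereas the paper's argument isolates exactly which structural properties of $B$ are used.

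One detail to fix: for the Young absorption of the data bracket you invoke the representation $\FKsig^{n+1}=\ts\bigl(D\Psi^{n+1}_{K,\sigma}N_K^{n+1}-B(D\Psi^{n+1}_{K,\sigma})DN^{n+1}_{K,\sigma}\bigr)$, but this bounds the flux in terms of $|D\Psi^{n+1}|$ and $|DN^{n+1}|$ separately, and neither quantity is controlled by $\mathbb I^{n+1}$ (which only sees $\min(N)\,|D_\sigma(\log N-\Psi)^{n+1}|^2$), so the absorption cannot proceed from that formula. What you actually need is $|\FKsig^{n+1}|\leq\ts\max(N_K^{n+1},N_{K,\sigma}^{n+1})\,|D_\sigma(\log N-\Psi)^{n+1}|$, i.e. the paper's \eqref{majabs1}; as noted above this drops out of your own integral representation, so the slip is presentational rather than a gap in the argument.
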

Let us note that the last inequality \eqref{diss-entropy-estimate}, which ensures the control of the discrete entropy production, 
depends on $\lambda$. However, as we are interested in the quasi-neutral limit $\lambda\to 0$, we can assume that $\lambda$ stays in a bounded interval $[0,\lambda_{max}]$ and then get a uniform bound in $\lambda$.

In Section \ref{sec.estimates}, we show how to obtain, from the discrete entropy-dissipation inequality, all the a priori estimates needed  for the convergence of the scheme. These estimates are given in the following Theorem \ref{thm-estap}. There are weak-BV inequality \eqref{estBV} and $L^2(0,T,H^1)$-estimates\eqref{estL2H1_NP} on $N$ and $P$ and $L^2(0,T,H^1)$-estimates \eqref{estL2H1_Psi} on $\Psi$. 

\begin{theorem}[A priori estimates satisfied by the approximate solution]\label{thm-estap}
We assume Hypotheses \ref{HYP}, let $\T$ be an admissible mesh of $\Omega$ satisfying \eqref{reg-mesh}  and $\Delta t>0$. 
We also assume that the initial and boundary conditions satisfy the quasi-neutrality relations \eqref{QN-CI} and \eqref{QN-CL}. Then, there exists a constant $K_F$ depending only on $\Omega$, $T$, $m$, $M$, $N^D$, $P^D$, $\Psi^D$, $\beta$ and $\xi$ such that, for all $\lambda\geq 0$, a solution to the scheme $\Sl$,  $(N_\T^{n},P_\T^n,\Psi_\T^n)_{0\leq n\leq N_T}$, satisfies the following inequalities: 
\begin{subequations}
\begin{gather}
\somn\Delta t\sum_{\sigma\in\E} \ts D_\sigma\Psi^{n+1}\biggl((D_\sigma P^{n+1})^2+(D_\sigma N^{n+1})^2\biggl) \leq K_{F}(1+\lambda^2),\label{estBV}\\ 
\somn \Delta t\sum_{\sigma\in\E}\ts (D_\sigma N^{n+1})^2+
\somn \Delta t\sum_{\sigma\in\E}\ts (D_\sigma P^{n+1})^2\leq K_F(1+\lambda^2),\label{estL2H1_NP}\\
\somn \Delta t\sum_{\sigma\in\E} \ts (D_\sigma\Psi^{n+1})^2\leq K_F(1+\lambda^2).\label{estL2H1_Psi}
\end{gather}
\end{subequations}
\end{theorem}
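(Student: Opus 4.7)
My plan is to prove \eqref{estL2H1_Psi} first and then deduce \eqref{estL2H1_NP} and \eqref{estBV} from it. The two ingredients are the entropy-dissipation bound \eqref{diss-entropy-estimate} of Theorem~\ref{thm-entropy} and the $L^\infty$ estimate \eqref{estLinf} of Theorem~\ref{thm-ex}. Since $m\leq N_K^n,N_{K,\sigma}^n,P_K^n,P_{K,\sigma}^n\leq M$, bounding the $\min$-factors in the definition of $\mathbb I^{n+1}$ from below by $m$ and summing in $n$ yields
\begin{equation*}
\somn\Delta t\sum_{\substack{\sigma \in \E,\\ \sigma\in\E_K}}\ts\Bigl[\bigl(D_\sigma(\log N^{n+1}-\Psi^{n+1})\bigr)^2+\bigl(D_\sigma(\log P^{n+1}+\Psi^{n+1})\bigr)^2\Bigr]\leq \frac{2K_E(1+\lambda^2)}{m}.
\end{equation*}

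To prove \eqref{estL2H1_Psi} I rely on the algebraic identity
\begin{equation*}
2\,D\Psi_{K,\sigma}^{n+1}=D\bigl(\log P^{n+1}+\Psi^{n+1}\bigr)_{K,\sigma}-D\bigl(\log N^{n+1}-\Psi^{n+1}\bigr)_{K,\sigma}+D\log\bigl(N^{n+1}/P^{n+1}\bigr)_{K,\sigma},
\end{equation*}
valid on every edge (on Dirichlet edges thanks to $N^D=P^D$ from \eqref{QN-CL}, and trivially on Neumann edges). Multiplying by $\ts D\Psi_{K,\sigma}^{n+1}$ and summing in $\sigma$, Young's inequality bounds the first two terms on the right by $\frac{1}{2}\sum\ts(D_\sigma\Psi^{n+1})^2$ plus the quantities already controlled. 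The main obstacle is the cross term $\sum\ts D\log(N^{n+1}/P^{n+1})\,D\Psi^{n+1}$. To deal with it I test the discrete Poisson equation \eqref{scheme-Psi} against $\log(P/N)_K^{n+1}$ and sum over $K$; a discrete integration by parts, whose Dirichlet boundary contribution vanishes because $\log(P^D/N^D)=0$, produces
\begin{equation*}
\lambda^{2}\sum_{\sigma\in\E}\ts\,D\log(P^{n+1}/N^{n+1})_{K,\sigma}\,D\Psi_{K,\sigma}^{n+1}=\sum_{K\in\T}\mK\bigl(P_K^{n+1}-N_K^{n+1}\bigr)\log\bigl(P_K^{n+1}/N_K^{n+1}\bigr)\geq 0,
\end{equation*}
by monotonicity of $\log$, hence $\sum\ts D\log(N^{n+1}/P^{n+1})\,D\Psi^{n+1}\leq 0$ (trivially when $\lambda=0$, since then $N=P$). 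Combining Young's inequality with this sign information gives $\sum_\sigma\ts(D_\sigma\Psi^{n+1})^2\leq\frac{1}{2}\sum_\sigma\ts\bigl[(D_\sigma(\log N^{n+1}-\Psi^{n+1}))^2+(D_\sigma(\log P^{n+1}+\Psi^{n+1}))^2\bigr]$; summing in time and invoking the first displayed inequality proves \eqref{estL2H1_Psi}.

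The estimate \eqref{estL2H1_NP} then follows from $|D_\sigma N|\leq M|D_\sigma\log N|$ (valid since $N\in[m,M]$) together with $D_\sigma\log N=D_\sigma(\log N-\Psi)+D_\sigma\Psi$ (up to a sign), which give $(D_\sigma N)^2\leq 2M^2[(D_\sigma(\log N-\Psi))^2+(D_\sigma\Psi)^2]$, and analogously for $P$; summation and combination with the first displayed inequality and \eqref{estL2H1_Psi} yield \eqref{estL2H1_NP}. Finally, \eqref{estBV} follows from the $L^\infty$ bound $|D_\sigma N|,|D_\sigma P|\leq 2M$ and the Cauchy-Schwarz inequality applied to $\sum\ts|D_\sigma\Psi|\,|D_\sigma N|$ (and analogously for $P$), using \eqref{estL2H1_Psi} and \eqref{estL2H1_NP}. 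The delicate step is the Poisson-testing argument against $\log(P/N)$: it is what makes the whole chain of estimates uniform in $\lambda\geq 0$, and it depends crucially on the quasi-neutrality assumptions \eqref{QN-CI}--\eqref{QN-CL}.
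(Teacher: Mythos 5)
Your proof is correct, but it runs in the opposite direction from the paper's and rests on a different key mechanism. The paper proves \eqref{estBV} first, by multiplying the schemes \eqref{scheme-N}--\eqref{scheme-P} by $\Delta t(N_K^{n+1}-N_K^D)$ and $\Delta t(P_K^{n+1}-P_K^D)$ and exploiting the specific algebraic structure of the Scharfetter--Gummel fluxes (the $\coth$ rewriting, $x\coth x\geq|x|$, and the monotonicity $(N-P)(N^2-P^2)\geq 0$ fed back through the Poisson equation); it then deduces \eqref{estL2H1_NP} via the decomposition $\FKsig=\widetilde{\mathcal F}_{K,\sigma}-\ts DN_{K,\sigma}$, and only at the end obtains \eqref{estL2H1_Psi} by adapting Gasser's argument, where the error term $\mathcal J_2$ (the mismatch between $DN$ and $N\,D\log N$) is absorbed using the already-established weak-BV bound. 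You instead attack \eqref{estL2H1_Psi} directly from the entropy production, writing $2D\Psi$ as a combination of the two dissipated quantities plus $D\log(N/P)$, and killing the cross term by testing the discrete Poisson equation against $\log(P_K/N_K)$ --- a sign argument that is the discrete analogue of the continuous identity $\lambda^2\int\nabla\Psi\cdot\nabla\log(P/N)=\int(P-N)\log(P/N)\geq 0$ and that I verified goes through (the Dirichlet boundary term vanishes by \eqref{QN-CL}, the $\lambda=0$ case is trivial since $N=P$). The remaining two estimates then follow by elementary manipulations ($|D_\sigma N|\leq M\,D_\sigma\log N$, triangle inequality, Cauchy--Schwarz). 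Your route is shorter and notably more robust: apart from the $L^\infty$ bounds it uses the flux definition only through the entropy-production control of Theorem~\ref{thm-entropy}, so it would apply verbatim to any discretization satisfying that inequality, whereas the paper's weak-BV step is tied to the Scharfetter--Gummel structure. What the paper's ordering buys is that \eqref{estBV} is obtained as a genuinely independent estimate (useful in the degenerate setting of \cite{CCHP} where no $L^\infty$ lower bound is available), while in your scheme it is merely a corollary of \eqref{estL2H1_NP} and \eqref{estL2H1_Psi} via Cauchy--Schwarz; within the hypotheses of this theorem both yield the same conclusion with the same $(1+\lambda^2)$ dependence.
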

Estimates \eqref{estL2H1_NP} and \eqref{estL2H1_Psi} yield the compactness of a sequence of approximate solutions, as shown for instance in \cite{CCHLP}, applying some arguments developed in \cite{EGHbook}. To prove the convergence of the numerical method, it remains to pass to the limit in the scheme and by this way prove that the limit of the sequence of approximate solutions is a weak solution to $\Sl$. It can still be done as in \cite{CCHLP}, but dealing with the Scharfetter-Gummel fluxes as in \cite{MarianneSG}. The convergence proof is not detailed in this paper. Let us just note that the convergence proof holds for all $\lambda\geq 0$. 

Finally, in Section \ref{sec.numexp}, we present some numerical experiments. They illustrate the stability of  the scheme when $\lambda$ varies and goes to $0$. They show that the proposed scheme is an asymptotic-preserving scheme in the quasi-neutral limit since the scheme order in space and time is preserved uniformly in the limit. Moreover, let us emphasize that although our results are proved under the restrictive assumption of vanishing doping profile, the numerical results show that the error estimates remain independent of $\lambda$, even for piecewise constant doping profiles.

\section{Existence of a solution to the numerical scheme}\label{sec.ex}

In this Section, we prove Theorem \ref{thm-ex} (existence of a solution to the numerical scheme $\Sl$ for all $\lambda\geq 0$). 
As the boundary conditions are explicitly defined by \eqref{CL_discretes},
it consists in proving at each time step the existence of $(N_\T^n,P_\T^n,\Psi_\T^n)$ solution to the nonlinear system of equations \eqref{scheme} when $\lambda>0$ or \eqref{Sz} when $\lambda=0$. We distinguish the two cases in the proof.

\subsection{Study of the case $\lambda>0$}

We consider here $\lambda>0$. The proof of Theorem \ref{thm-ex} is done by induction on $n$. 
The vectors $N_\T^0$ and $P_\T^0$ are defined by \eqref{CInum} and $\Psi_\T^0$ by \eqref{scheme-Psi}. Furthermore, the hypothesis on the initial data \eqref{HYP-mM} ensures that 
$$
m\leq N_K^0, P_K^0\leq M \quad \forall K\in\T.
$$
We suppose that, for some $n\geq 0$, $(N_\T^n,P_\T^n,\Psi_\T^n)$ is known and satisfies the $L^{\infty}$ estimate \eqref{estLinf}. We want to establish  the existence of $(N_\T^{n+1},P_\T^{n+1},\Psi_\T^{n+1})$ solution to the nonlinear system of equations \eqref{scheme}, also satisfying \eqref{estLinf}.  Therefore, we follow some ideas developed by A. Prohl and M. Schmuck in \cite{ProhlSchmuck2009} and used by C. Bataillon {\em et al} in \cite{corrosion}. This method consists in introducing a problem penalized by an arbitrary parameter which will be conveniently chosen.

Let $ \mu>0$, we introduce an application $
{ T}_{\mu}^{n}: \R^{\theta}\times \R^{\theta}   \rightarrow  \R^{\theta} \times  \R^{\theta}$, such that 
${ T}_{\mu}^{n}(N_\T  , P_\T) = (\widehat{N}_\T , \widehat{P}_\T)$,
based on a linearization of the scheme \eqref{scheme} and defined in two steps. 
\begin{itemize}
\item First, we define $\Psi_\T \in \R^{\theta}$ as the solution to the following linear system: 
\end{itemize}
\begin{subequations}\label{Tmu-schemepsi}
\begin{gather}
-\lambda^{2}\sum_{\sigma \in \E_{K}} \ts \DPsiKs=\text{m}(K)(P_{K}-N_{K}),\quad \forall K\in\T,\\
\mbox{ with } \Psi_{\sigma}=\Psi_{\sigma}^{D},\quad\forall \sigma\in \E_{ext}^{D}.
\end{gather}
\end{subequations}
\begin{itemize}
\item Then, we construct $(\widehat{N}_\T,\widehat{P}_\T)$ as the solution to the following linear scheme:
\end{itemize}
\begin{subequations}\label{Tmu-schemetilde}
\begin{align}
&\ds\frac{\m (K)}{\Delta t} \left(\left(1+\frac{\mu}{\lambda^2}\right)\,\widehat N_K-\frac{\mu}{\lambda^2}\,N_K-N_K^n\right)\nonumber\\
&\hspace{1.5cm}+\ds\sum_{\sigma\in {\mathcal E}_K}\ts \left( B\left(-\DPsiKs\right)\widehat{N}_{K}-B\left(\DPsiKs\right)\widehat{N}_{K,\sigma}\right) =0,\quad \forall K\in{\mathcal T},\label{scheme-Ntilde}\\
&\ds\frac{\m (K)}{\Delta t} \left(\left(1+\frac{\mu}{\lambda^2}\right)\,\widehat P_K-\frac{\mu}{\lambda^2}\,P_K-P_K^n\right)\nonumber\\
&\hspace{1.5cm}+\ds\sum_{\sigma\in {\mathcal E}_K}\ts \left( B\left(\DPsiKs\right)\widehat{P}_{K}-B\left(-\DPsiKs\right)\widehat{P}_{K,\sigma}\right) =0,\quad \forall K\in{\mathcal T},\label{scheme-Ptilde}\\
&\mbox{ with } \widehat{N}_{\sigma}=N_{\sigma}^{D}\  \mbox{�and }� \widehat{P}_{\sigma}=P_{\sigma}^{D} \quad \forall 
\sigma \in \E_{ext}^{D}.\label{schemetilde_bc}
\end{align}
\end{subequations}
 The existence and uniqueness of $\Psi_\T$ solution to the linear system \eqref{Tmu-schemepsi} are obvious. The second step \eqref{Tmu-schemetilde} also leads to two decoupled linear systems which can be written under a matricial form: ${\mathbb A}_{N}\, \widehat{N}_\T={\mathbb S}_{N}^n$ and ${\mathbb A}_{P}\widehat{P}_\T={\mathbb S}_P^n$ . The matrix ${\mathbb A}_{N}$ for instance is the sparse matrix defined by  
\begin{align*}
({\mathbb A}_{N})_{K,K} &= \frac{\text{m}(K)}{\Delta t}\left(1+\frac{\mu}{\lambda^{2}}\right)+\sum_{\sigma \in \E_{K}\setminus \E_{K,ext}^N}\ts\,B\left(-\DPsiKs\right) \quad \forall K \in \T, \\
({\mathbb A}_{N})_{K,L} &= -\ts \, B \left(\DPsiKs \right) \quad \forall L \in \T \text{ such that } \sigma=K|L \in \E_{int}.
\end{align*} 
We verify that ${\mathbb A}_{N}$ has positive diagonal terms, nonpositive offdiagonal terms and is strictly diagonally dominant with respect to its columns. It implies that ${\mathbb A}_{N}$ is an M-matrix: it is invertible and its inverse has only nonnegative coefficients. The same result holds for ${\mathbb A}_{P}$. Thus, we obtain that the scheme \eqref{Tmu-schemetilde} admits a unique solution $(\widehat{N}_\T,\widehat{P}_\T)\in \R^{\theta}\times \R^{\theta}$, so that the application $T_{\mu}^n$ is well defined and is a continuous application.

Now, in order to apply Brouwer's fixed point theorem, we want to prove that $T_{\mu}^{n}$ preserves the set
\begin{equation}\label{defiC}
\mathcal{C}_{m,M}=\left\{(N_\T,P_\T) \in \R^{\theta}\times\R^{\theta}; \quad m \leq N_{K}, P_{K}\leq M, \quad \forall K \in \T \right\}.
\end{equation}
The right hand side of the linear system \eqref{scheme-Ntilde} is defined by 
 \begin{equation*}
({\mathbb S}_{N})_{K}=\frac{\text{m}(K)}{\Delta t}\left(\NKn+\frac{\mu}{\lambda^{2}}\,N_{K}\right)+\ds{\sum_{\sigma \in \E_{K,ext}^{D}}}\ts\,B\left(\DPsiKs\right)\,{N}_{\sigma}^D, \quad  \forall K \in \T.
\end{equation*}
If $N_\T \geq 0$, we have ${\mathbb S}_N^n\geq 0$ and, as ${\mathbb A}_N$ is an M-matrix, we get  $\widehat N_\T\geq 0$. 
Similarly, if $P_\T \geq 0$, we obtain that $\widehat P_\T\geq 0$.

In order to prove that $\widehat N_K\leq M$ for all $K\in\T$, we introduce ${\mathbf M}_\T$ the constant vector of $R^{\theta}$  with unique value $M$ and we compute ${\mathbb A}_{N}(\widehat{N}_\T-\mathbf{M}_\T)$. Using the property \eqref{propBern}, we get that for all $K \in \T$,
\begin{multline*}
\left({\mathbb A}_{N}(\widehat{N}_\T-\mathbf{M}_\T)\right)_{K}= \,\,\,\frac{\text{m}(K)}{\Delta t}(\NKn-M)+\frac{\text{m}}{\Delta t}\,\frac{\mu}{\lambda^{2}}(N_{K}-M) \\
+\sum_{\sigma \in \E_{K,ext}^{D}}\ts \left(B \left(\DPsiKs\right)N_{\sigma}^{D}-B \left(-\DPsiKs\right)M \right) 
-\,\,\, M \sum_{\sigma \in \E_{K,int}}\ts \, \DPsiKs.
\end{multline*}
Since $B$ is a nonnegative function and $N^D$ satisfies \eqref{HYP-mM}, we have, for all $\sigma \in \E_{K,ext}^{D}$, 
$$
B \left(\DPsiKs\right)N_{\sigma}^{D}-B \left(-\DPsiKs\right)M= \, B \left(\DPsiKs\right)(N_{\sigma}^{D}-M)-\DPsiKs \, M 
\leq  \, -\DPsiKs \, M $$
Then, using the induction assumption $\NKn \leq M$ for all $K\in\T$, it yields
\begin{equation*}
\left({\mathbb A}_{N}(\widehat{N}_\T-\mathbf{M}_\T)\right)_{K} \leq \frac{\text{m}(K)}{\Delta t}\,\frac{\mu}{\lambda^{2}}\,(N_{K}-M)-M \sum_{\sigma \in \E_{K}}\ts \, \DPsiKs,
\end{equation*}
and using \eqref{Tmu-schemepsi}, we get that for all $K\in\T$
\begin{equation}\label{majNlin}
\left({\mathbb A}_{N}(\widehat{N}_\T-\mathbf{M}_\T)\right)_{K} \leq \frac{\text{m}(K)}{\lambda^{2}}\left(\frac{\mu}{\Delta t}-M\right)(N_{K}-M)+M\,\frac{\text{m}(K)}{\lambda^{2}}(P_{K}-M).
\end{equation}
We can prove exactly in the same way that, for all $K\in\T$,
\begin{equation}\label{minNlin}
\left({\mathbb A}_{N}(\widehat{N}_\T-\mathbf{m}_\T)\right)_{K} \geq \frac{\text{m}(K)}{\lambda^{2}}\left(\frac{\mu}{\Delta t}-m\right)(N_{K}-m)+m\,\frac{\text{m}(K)}{\lambda^{2}}(P_{K}-m) .
\end{equation}
Now, let us choose $\mu$ in order to obtain the expected $L^{\infty}$ properties. Indeed, since $\mu>0$ is an arbitrary constant, we can choose it such that $m \, \Delta t \leq M \, \Delta t \leq \mu$ without any constraint on the time step. Then, if $(N_\T,P_\T)\in{\mathcal C}_{m,M}$, inequalities \eqref{majNlin} and \eqref{minNlin} imply that 
$$
{\mathbb A}_{N}(\widehat{N}_\T-\mathbf{M}_\T)\leq 0 \mbox{ and } {\mathbb A}_{N}(\widehat{N}_\T-\mathbf{m}_\T)\geq 0. 
$$
As ${\mathbb A}_{N}$ is an M-matrix, we conclude that $m\leq \widehat N_K\leq M$ for all $K\in \T$. The proof that 
$m\leq \widehat P_K\leq M$ for all $K\in \T$ is similar and we have $(\widehat N_\T,\widehat P_\T)\in {\mathcal C}_{m,M}$.

Finally, $T_{\mu}^{n}$ is a continuous application which stabilizes the set $\mathcal{C}_{m,M}$. Then, by the Brouwer's fixed-point theorem, $T_{\mu}^{n}$ has a fixed point  in $\mathcal{C}_{m,M}$ which is denoted by $(N_\T^{n+1}, P_\T^{n+1})$ and satisfies the $L^{\infty}$ estimate \eqref{estLinf}. The corresponding $\Psi_\T$ defined by \eqref{Tmu-schemepsi} is denoted by $\Psi_\T^{n+1}$ and $( N_\T^{n+1}, P_\T^{n+1}, \Psi_\T^{n+1})$ is a solution to the scheme \eqref{scheme}. This shows Theorem \ref{thm-ex} when $\lambda>0$. 

\subsection{Study of the case $\lambda=0$}

Now, we prove Theorem \ref{thm-ex} when $\lambda=0$. In this case, thanks to the quasi-neutrality assumptions, we have shown that the scheme $\Sz$ rewrites as the nonlinear system of equations \eqref{Sz}. Indeed, it is sufficient to study the system \eqref{Sz-N}-\eqref{Sz-Psi}, whose unknowns are $(N_\T^{n+1}, \Psi_\T^{n+1})$. 

The proof is done by induction as in the case $\lambda>0$. Let us first note that $N_\T^0$ satisfy the $L^{\infty}$ estimate \eqref{estLinf}. Then, we assume that, for $n\geq 0$,  $N_\T^n$ is known and also satisfies \eqref{estLinf}. As in the case $\lambda>0$, we introduce an application
${ T}^{n}: (\R_+^{\ast})^{\theta}   \rightarrow  \R^{\theta} $ such that ${ T}^{n}(N_\T)= \widehat{N}_\T$, 
based on a linearization of \eqref{Sz-N}-\eqref{Sz-Psi} and defined in two steps. 
\begin{itemize}
\item First, we define $\Psi_\T\in\R^{\theta}$ as the solution to the linear system:
\end{itemize}
\begin{subequations}\label{T-Psi}
\begin{gather}
-\sum_{\sigma\in\E_K}\ts D\Psi_{K,\sigma}(N_K+N_{K,\sigma})=0,\quad \forall K\in{\mathcal T},\label{T_Psi_int}\\
\mbox{ with } \Psi_\sigma=\Psi_\sigma^D\quad  \forall \sigma\in\E_{ext}^D.
\end{gather}
\end{subequations}
\begin{itemize}
\item Then, we define ${\widehat N}_\T\in\R^{\theta}$ as the solution to the linear system:
\end{itemize}
\begin{subequations}\label{T-N}
\begin{gather}
\frac{\m(K)}{\Delta t}({\widehat N}_K-N_K^n)-\sum_{\sigma\in\E_K} \ts \frac{ B(D\Psi_{K,\sigma})+ B(-D\Psi_{K,\sigma})}{2} D{\widehat N}_{K,\sigma}=0, \forall K\in{\mathcal T},\\
\mbox{ with } {\widehat N}_\sigma=N_\sigma^D \quad\forall \sigma\in\E_{ext}^D.
\end{gather}
\end{subequations}
First, let us prove that the application $T^n$ is well defined. If $N_K>0$ for all $K\in\T$, the matrix of the linear system \eqref{T-Psi} is a positive symmetric definite matrix (it can be proved for instance by multiplying \eqref{T_Psi_int} by $\Psi_K$ and summing over $K\in\T$). Therefore, $\Psi_\T$ is uniquely defined. 

The linear system \eqref{T-N} can be written under the matricial form ${\mathbb A}_N \widehat N_\T=\mathbb S_N^n$ where 
the matrix ${\mathbb A}_N$ is  defined by:
\begin{align*}
({\mathbb A}_{N})_{K,K} &= \frac{\m(K)}{\Delta t} +\frac{1}{2}\sum_{\sigma\in\E_K\setminus\E_{K,ext}^N}\ts\left(B(D\Psi_{K,\sigma})+ B(-D\Psi_{K,\sigma})\right) \quad \forall K \in \T, \\
({\mathbb A}_{N})_{K,L} &=- \frac{\ts}{2} \left(B(D\Psi_{K,\sigma})+ B(-D\Psi_{K,\sigma})\right) \quad \forall L \in \T \text{ such that } \sigma=K|L \in \E_{int}.
\end{align*} 
and the right hand side $\mathbb S_N^n$ is defined by:
 \begin{equation*}
({\mathbb S}_{N}^{n})_{K}= \frac{\m(K)}{\Delta t}N_K^n+\frac{1}{2}\sum_{\sigma\in\E_{K,ext}^D}\ts \left(B(D\Psi_{K,\sigma})+ B(-D\Psi_{K,\sigma})\right)N_\sigma^D\quad  \forall K \in \T.
\end{equation*}
The matrix ${\mathbb A}_N$ is an M-matrix because it has positive diagonal terms, nonpositive off diagonal terms and it is strictly diagonally dominant with respect to its columns. Therefore the linear system \eqref{T-N} has a unique solution $\widehat N_\T$, so that the application  $T^n$ is well defined. It is also continuous. 

Now, let us prove that $T^n$ preserves the set 
$$
{\mathcal K}_{m,M}=\left\{ N_\T\in\R^{\theta}; \ m\leq N_K\leq M, \ \forall K\in\T\right\}.
$$
Therefore, we compute ${\mathbb A}_{N}(\widehat{N}_\T-\mathbf{M}_\T)$. We obtain 
$$
\begin{gathered}
\left({\mathbb A}_{N}(\widehat{N}_\T-\mathbf{M}_\T)\right)_K=\ds\frac{\m(K)}{\Delta t}(N_K^n-M)\hspace{6cm}\\
\hspace{3cm}+\frac{1}{2}\sum_{\sigma\in\E_{K,ext}^D}\ts \left(B(D\Psi_{K,\sigma})+ B(-D\Psi_{K,\sigma})\right)(N_\sigma^D-M).
\end{gathered}
$$
Thanks to the induction hypothesis and \eqref{HYP-mM}, we deduce that ${\mathbb A}_{N}(\widehat{N}_\T-\mathbf{M}_\T)\leq 0$. Similarly,  we prove that ${\mathbb A}_{N}(\widehat{N}_\T-\mathbf{m}_\T)\geq 0$. This implies $\widehat{N}_\T\in {\mathcal K}_{m,M}$. We conclude the proof of Theorem \ref{thm-ex} in the case $\lambda=0$ by applying the Brouwer's fixed point theorem as in the case $\lambda>0$.

\section{Discrete entropy-dissipation inequality}\label{sec.entropy}

In this Section, we prove Theorem \ref{thm-entropy}. Therefore, we adapt the proof done by M. Chatard in \cite{Marianne-FVCA6} for the study of the long-time behavior of the scheme (in this case, the entropy functional is defined relatively to the thermal equilibrium). 

Since $H$ is a convex function, we have $\mathbb E ^n\geq 0$ and $\mathbb E ^{n+1}-\mathbb E ^{n} \leq T_{1}+T_{2}+T_{3}$, with 
\begin{eqnarray*}
T_{1} &=& \sum_{K\in \mathcal{T}}\mK\left(\log\left(N_{K}^{n+1}\right)-\log\left(N_K^D\right)\right)\left(N_{K}^{n+1}-N_{K}^{n}\right),\\
T_{2} &=& \sum_{K\in \mathcal{T}}\mK\left(\log\left(P_{K}^{n+1}\right)-\log\left(P_K^D\right)\right)\left(P_{K}^{n+1}-P_{K}^{n}\right),\\
T_{3} &=& \frac{\lambda^{2}}{2}\left|\Psi^{n+1}_{\cal M}-\Psi_{\cal M}^{D}\right|^{2}_{1,_{\cal M}}-\frac{\lambda^{2}}{2}\left|\Psi_{\cal M}^{n}-\Psi_{\cal M}^{D}\right|^{2}_{1,_{\cal M}}
\end{eqnarray*}

Multiplying the scheme on $N$~\eqref{scheme-N} by $\Delta t\left(\log\left(N_{K}^{n+1}\right)-\log\left(N_K^D\right)\right)$, summing 
over $K\in\T$ and following a discrete integration by parts (using \eqref{CL_discretes}), we rewrite $T_1$:
\begin{equation}\label{t1}
\begin{aligned}
T_1&=-\Delta t \sum_{K\in\T} \sum_{\sigma\in\E_K} \FKsig^{n+1}\left(\log\left(N_{K}^{n+1}\right)-\log\left(N_K^D\right)\right)\\
&=\Delta t \somsig\FKsig^{n+1}\left((D\log N^{n+1})_{K,\sigma}-(D\log N^D)_{K,\sigma}\right).
\end{aligned}
\end{equation}
Starting from the scheme on $P$~\eqref{scheme-N} and following the same kind of computations,  we also rewrite $T_2$:
\begin{equation}\label{t2}
T_2=\Delta t \somsig\GKsig^{n+1}\left((D\log P^{n+1})_{K,\sigma}-(D\log P^D)_{K,\sigma}\right).
\end{equation}

Now, in order to estimate $T_3$, we subtract  two consecutive time steps of the scheme on $\Psi$~\eqref{scheme-Psi}. It yields:
$$
-\lambda^2 \ds\sum_{\sigma\in\E_K} \ts D\Psi_{K,\sigma}^{n+1}+ \lambda^2 \ds\sum_{\sigma\in\E_K} \ts D\Psi_{K,\sigma}^{n}=\mK\Bigl((P_K^{n+1}-P_K^n)-(N_K^{n+1}-N_K^n)\Bigl).
$$
Thanks to the schemes on $N$~\eqref{scheme-N} and $P$~\eqref{scheme-P}, it rewrites
$$
\lambda^2 \ds\sum_{\sigma\in\E_K} \ts (D\Psi_{K,\sigma}^{n+1}-D\Psi_{K,\sigma}^{D})- \lambda^2 \ds\sum_{\sigma\in\E_K} \ts (D\Psi_{K,\sigma}^{n}-D\Psi_{K,\sigma}^{D})=\Delta t\sum_{\sigma\in\E_K}(\GKsig^{n+1}-\FKsig^{n+1}).
$$

Multiplying this equality by $\Psi_K^{n+1}-\Psi_K^D$, summing over  $K\in\T$, integrating by parts and using the boundary conditions, we obtain:
\begin{multline*}
\lambda^2\!\!\somsig \ts (\DPsiKs^{n+1}-\DPsiKs^D)^2-\lambda^2\!\!\somsig \ts (\DPsiKs^{n}-\DPsiKs^D)(\DPsiKs^{n+1}-\DPsiKs^D)=\\
\Delta t\somsig(\GKsig^{n+1}-\FKsig^{n+1})(\DPsiKs^{n+1}-\DPsiKs^D).
\end{multline*}
But, for all $K\in\T$ and all $\sigma\in\E_K$, we have
$$
-(\DPsiKs^{n}-\DPsiKs^D)(\DPsiKs^{n+1}-\DPsiKs^D)\geq\! \ds-\frac{1}{2}  (\DPsiKs^{n}-\DPsiKs^D)^2
-\ds\frac{1}{2} (\DPsiKs^{n+1}-\DPsiKs^D)^2 
$$
and therefore for all $\lambda\geq 0$
\begin{equation}\label{t3}
T_3\leq \Delta t\somsig(\GKsig^{n+1}-\FKsig^{n+1})(\DPsiKs^{n+1}-\DPsiKs^D).
\end{equation}
From \eqref{t1}, \eqref{t2} and \eqref{t3}, we get 
$$\begin{aligned}
\frac{\mathbb E ^{n+1}-\mathbb E ^{n}}{\Delta t}\leq&\somsig\Biggl[\FKsig^{n+1}\left(D(\log N-\Psi)_{K,\sigma}^{n+1}-D(\log N-\Psi)_{K,\sigma}^{D}\right)\\
&+\GKsig^{n+1}\left(D(\log P+\Psi)_{K,\sigma}^{n+1}-D(\log P+\Psi)_{K,\sigma}^{D}\right)\Biggl].\end{aligned}$$
But, thanks to inequalities \eqref{maj1} and \eqref{maj2}, we have
$$\somsig\Biggl[\FKsig^{n+1}D(\log N-\Psi)_{K,\sigma}^{n+1}+\GKsig^{n+1}D(\log P+\Psi)_{K,\sigma}^{n+1}\Biggl]
\leq -I^{n+1}
$$
Now, using \eqref{majabs1}, \eqref{majabs2} and Young's inequality, we get 
\begin{multline*}
\Biggl\vert\FKsig^{n+1}D(\log N-\Psi)_{K,\sigma}^{D}\Biggl\vert+\Biggl\vert\GKsig^{n+1}D(\log P+\Psi)_{K,\sigma}^{D}\Biggl\vert\leq \\
\frac{\ts}{2}
\Biggl[\min(N_K^{n+1},N_{K,\sigma}^{n+1})\!\left|D_\sigma(\log N-\Psi)^{n+1}\right|^2 \!+ \frac{\max(N_K^{n+1},N_{K,\sigma}^{n+1})^2}{\min(N_K^{n+1},N_{K,\sigma}^{n+1})}\left|D_\sigma(\log N-\Psi)^{D}\right|^2 \\
+ \min(P_K^{n+1},P_{K,\sigma}^{n+1})\left|D_\sigma(\log P+\Psi)^{n+1}\right|^2+ \frac{\max(P_K^{n+1},P_{K,\sigma}^D)^2}{\min(P_K^{n+1},P_{K,\sigma}^{n+1})}\left|D_\sigma(\log P+\Psi)^{D}\right|^2.\Biggl]
\end{multline*}
Finally, thanks to the $L^{\infty}$-estimates \eqref{estLinf} in Theorem~\ref{thm-ex}, we obtain 
$$\frac{\mathbb E ^{n+1}-\mathbb E ^{n}}{\Delta t}\leq -\frac{1}{2}I^{n+1}+\frac{M^2}{2m}\, \left|\log(N_{\cal M}^D)-\Psi_{\cal M}^{D}\right|^{2}_{1,{\cal M}}+\frac{M^2}{2m}\, \left|\log(P_{\cal M}^D)+\Psi_{\mathcal M}^{D}\right|^{2}_{1,{\cal M}}$$
which rewrites
\begin{equation}\label{bientot_fini}
\ds\frac{\mathbb E ^{n+1}-\mathbb E ^{n}}{\Delta t}+\frac{1}{2}\, \mathbb I^{n+1} \leq 
\frac{M^2}{2m}\left( \left|\log(N_{\cal M}^D)-\Psi_{\cal M}^{D}\right|^{2}_{1,\T}+\left|\log(P_{\cal M}^D)+\Psi_{\cal M}^{D}\right|^{2}_{1,\T}\right).
\end{equation}
But, thanks to hypothesis \eqref{HYP-CL}, the functions $\log(N^D)-\Psi^{D}$ and $\log(P^D)+\Psi^{D}$ belong to 
$H^1(\Omega)$. Therefore, using Lemma 9.4 in \cite{EGHbook}, we have 
$$
\begin{aligned}
&\left|\log(N_{\cal M}^D)-\Psi_{\cal M}^{D}\right|^{2}_{1,{\cal M}}\leq {\cal K}\,\|\log(N^D)-\Psi^D\|^2_{H^1(\Omega)} \\
&\mbox{ and }
\left|\log(P_{\cal M}^D)+\Psi_{\cal M}^{D}\right|^{2}_{1,{\cal M}}\leq {\cal K}\,\|\log(P^D)+\Psi^D\|^2_{H^1(\Omega)}
\end{aligned}$$
with ${\cal K}$ depending on $\beta$ and $\xi$ (defined in \eqref{reg-mesh}).
It yields \eqref{entropy-estimate}.

Summing \eqref{entropy-estimate} over $n\in\{0,\ldots N_T-1\}$ yields:
\begin{equation}\label{depI}
\ds\sum_{n=0}^{N_T-1} \Delta t\, \mathbb I^{n+1}\ \leq\  \mathbb E ^{N_T}+\ds\sum_{n=0}^{N_T-1} \Delta t\, \mathbb I^{n+1}\  
\leq \ T\,K_E +\mathbb E^0.
\end{equation}
It remains now to bound $\mathbb E^0$. As the function $H$ satisfies the following inequality:
$$
\forall x,y>0 \quad H(y)-H(x)-\log x (y-x)\leq \frac{1}{\min(x,y)} \frac{(y-x)^2}{2}, 
$$
we get, using \eqref{HYP-mM},
$$
\sum_{K \in \mathcal{T}}\text{m}(K) \left(H(N^{0}_{K})-H(N^{D}_{K})-\log(N^{D}_{K})\left(N^{0}_{K}-N^{D}_{K}\right)\right)\leq \text{m}(\Omega)\frac{(M-m)^2}{2m},$$
and the same inequality for $P$.
Then, multiplying the scheme \eqref{scheme-Psi} at $n=0$ by $\Psi_K^0-\Psi_K^D$ and summing over $K\in\T$, we get 
$$
\lambda^2\somsig \ts D\Psi_{K,\sigma}^0 (D\Psi_{K,\sigma}^0-D\Psi_{K,\sigma}^D)=\sum_{K\in\T}\text{m}(K) (P_K^0-N_K^0)(\Psi_K^0-\Psi_K^D)=0,
$$
if the initial conditions satisfy the quasi-neutrality assumption \eqref{QN-CI}. Then, using 
$a(a-b)\geq (a-b)^2/2-b^2/2$ for $a,b\in\R$ and once more Lemma 9.4 in \cite{EGHbook}, we obtain
$$
\ds\frac{\lambda^2}{2}\vert \Psi_{\cal M}^0-\Psi_{\cal M}^D\vert_{1,{\cal M}}\leq \ds\frac{\lambda^2}{2}\vert\Psi_{\cal M}^D\vert\leq \ds\frac{\lambda^2}{2}{\cal K}\,\Vert\Psi^D\Vert_{H^1(\Omega)}.
$$
with ${\cal K}$ depending on $\beta$ and $\xi$ (defined in \eqref{reg-mesh}).

Finally, we obtain
$\mathbb E^0\leq K_E^0 (1+\lambda^2)$,
with $K_E^0$ depending on $\Omega$, $m$, $M$, $\Psi^D$, $\beta$ and $\xi$. Inserting this result in~\eqref{depI}, we deduce the discrete control of the entropy production \eqref{diss-entropy-estimate} with an adaptation of the constant $K_E$. 

It concludes the proof of Theorem \ref{thm-entropy}. Let us note that the hypothesis on the vanishing  doping profile is not directly necessary to follow the computations in this proof. However, we need it in order to ensure the lower and the upper bounds on the discrete densities, with their strict positivity. 

\section{A priori estimates on the scheme}\label{sec.estimates}
 This Section is devoted to the proof of Theorem \ref{thm-estap}. This proof is split into three steps: first, we establish the weak-BV inequality on $N$ and $P$ \eqref{estBV}; then, we deduce the $L^2(0,T,H^1)$-estimate on $N$ and $P$ \eqref{estL2H1_NP}; finally, we conclude with the $L^2(0,T,H^1)$-estimate on $\Psi$ \eqref{estL2H1_Psi}.

\subsection{ Weak BV-inequality on $N$ and $P$}

First, let us first prove the inequality \eqref{estBV} of Theorem \ref{thm-estap}. Therefore, we denote by $T_{BV}$ the left-hand-side of \eqref{estBV}, that is  the term we want to bound.

We follow the ideas of \cite{CCHP}: we multiply the scheme on $N$ \eqref{scheme-N} by $\Delta t\, (\NKnp-N_K^D)$ and the scheme on $P$ \eqref{scheme-P} by $\Delta t\, (\PKnp-P_K^D)$ and we sum over $K\in\T$ and $n$.  It yields 
\begin{equation}\label{depEF}
E_1+E_2+E_3+ F_1+F_2+F_3=0,
\end{equation}
with 
$$
\begin{gathered}
E_1=\!\!\!\somn\somK \!\mK(\NKnp-\NKn)(\NKnp-N_K^D),\ E_2=-\!\!\!\somn\!\! \Delta t \!\!\somsig\!\!\!\FKsig^{n+1}DN_{K,\sigma}^{n+1},\\
E_3=\somn \Delta t \!\!\!\somsig \FKsig^{n+1}DN_{K,\sigma}^{D},\ 
 F_1=\!\!\somn\somK \mK(\PKnp-\PKn)(\PKnp-P_K^D),\\
F_2=-\somn\!\!  \Delta t \somsig \GKsig^{n+1}DP_{K,\sigma}^{n+1},\ 
 F_3=\somn \Delta t \somsig \GKsig^{n+1}DP_{K,\sigma}^{D}.\\
\end{gathered}
$$

As
$(\NKnp-\NKn)(\NKnp-N_K^D)=\Bigl((\NKnp-N_K^D)^2+(\NKnp-\NKn)^2-
(\NKn-N_K^D)^2\Bigl)/2$, we get:
\begin{equation}\label{minE1F1}
\begin{aligned}
&E_1\geq -\ds\frac{1}{2} \somK \mK (N_K^0-N_K^D)^2\geq -\ds\frac{{\rm m}(\Omega)(M-m)^2}{2}
\hspace{0.5cm}\\
&\mbox{ and }\hspace{0.5cm} 
F_1\geq -\ds\frac{1}{2} \somK \mK (P_K^0-P_K^D)^2\geq -\ds\frac{{\rm m}(\Omega)(M-m)^2}{2}.
\end{aligned}
\end{equation}
We may also bound the terms $E_3$ and $F_3$. Indeed, using successively the property of the flux $\FKsig^{n+1}$ \eqref{majabs1}, the $L^{\infty}$ estimates and  Cauchy-Schwarz inequality, we get
\begin{multline*}
\vert E_3\vert \leq \somn \Delta t\somsig \ts \max (\NKnp,N_{K,\sigma}^{n+1}) D_\sigma(\log N-\Psi)^{n+1}D_\sigma N^D\\
\leq  \frac{M}{\sqrt{m}}\sqrt{T}\, \vert N_{\cal M}^D\vert_{1,{\cal M}}\Biggl( \somn \Delta t\!\!\!\somsig \!\!\!\!\ts \min (\NKnp,N_{K,\sigma}^{n+1})\left(D_\sigma(\log N-\Psi)^{n+1}\right)^2\Biggl)^{\frac{1}{2}}
\end{multline*}
But, the right-hand-side is bounded thanks to the control of the entropy production \eqref{diss-entropy-estimate} and the hypothesis \eqref{HYP-CL}. Following similar computations for $F_3$, we  get 
\begin{equation}\label{majE3}
\vert E_3\vert \leq \mathcal K(1+\lambda^2) \mbox{ and } \vert F_3\vert \leq \mathcal K (1+\lambda^2)
\end{equation}
with $\mathcal K$ depending only on $T$, $K_E$, $M$, $m$, $N^D$, $P^D$, $\beta$ and $\xi$. 

We focus now on the main terms $E_2$ and $F_2$. Using the definition of the Bernoulli function \eqref{Bern}, the numerical fluxes $\FKsig^{n+1}$ and $\GKsig^{n+1}$, defined by  \eqref{FLUX-SG}, rewrite: 
\begin{eqnarray*}
\FKsig^{n+1}&=& \frac{\ts}{2}\left[ D\Psi_{K,\sigma}^{n+1}(N_K^{n+1}+N_{K,\sigma}^{n+1})- D\Psi_{K,\sigma}^{n+1}\coth\left(\frac{D\Psi_{K,\sigma}^{n+1}}{2}\right)DN_{K,\sigma}^{n+1}\right],\\
\GKsig^{n+1}&=& \frac{\ts}{2}\left[ -D\Psi_{K,\sigma}^{n+1}(P_K^{n+1}+P_{K,\sigma}^{n+1})- D\Psi_{K,\sigma}^{n+1}\coth\left(\frac{D\Psi_{K,\sigma}^{n+1}}{2}\right)DP_{K,\sigma}^{n+1}\right].
\end{eqnarray*}
Since $x\coth(x)\geq \vert x\vert$ for all $x\in\R$, we obtain 
\begin{eqnarray*}
E_2&\geq& \frac{1}{2}\! \somn \!\!\Delta t\Biggl[\sum_{\sigma\in\E}\ts \, D_{\sigma}\Psi^{n+1}\, (D_{\sigma}N^{n+1})^2-\!\!\!\somsig\!\!\!\!\!\ts D\Psi_{K,\sigma}^{n+1}\Bigl((N_{K,\sigma}^{n+1})^2-(\NKnp)^2\Bigl)\Biggl],\\
F_2&\geq &\frac{1}{2} \!\somn \!\!\Delta t\Biggl[\sum_{\sigma\in\E}\ts \, D_{\sigma}\Psi^{n+1}\, (D_{\sigma}P^{n+1})^2+\!\!\!
\somsig\!\!\!\!\!\ts D\Psi_{K,\sigma}^{n+1}\Bigl((P_{K,\sigma}^{n+1})^2-(\PKnp)^2\Bigl)
\Biggl].
\end{eqnarray*}Summing these two inequalities, we can integrate by parts due to the quasi-neutrality of the boundary conditions \eqref{QN-CL} and we get 
$$E_2+F_2\geq\frac{1}{2}\somn\Delta t \somK\sum_{\sigma\in\E_K}\ts D\Psi_{K,\sigma}^{n+1}\Bigl((\NKnp)^2-(\PKnp)^2\Bigl)+\frac{1}{2} T_{BV}.$$
In the case $\lambda=0$, using $P^{n+1}_K=N^{n+1}_K$, we obtain
\begin{equation}\label{E2F2}
E_2+F_2\geq \frac{1}{2} T_{BV}.
\end{equation}
In the case $\lambda>0$, using the scheme on $\Psi$~\eqref{scheme-Psi}, we get:
$$E_2+F_2\geq \frac{1}{2\lambda^2}\somn\Delta t \somK\mK(\NKnp-\PKnp)\Bigl((\NKnp)^2-(\PKnp)^2\Bigl)+\frac{1}{2} T_{BV}.$$
Since the function $x\mapsto x^2$ is nondecreasing on $\R^+$, it also yields~\eqref{E2F2}. Finally, we deduce the weak-BV inequality \eqref{estBV} from \eqref{depEF}, \eqref{minE1F1}, \eqref{majE3} and 
\eqref{E2F2}.

\subsection{Discrete $L^2(0,T;H^1)$-estimates on the densities}

Now, we give the proof of  the inequality \eqref{estL2H1_NP} of Theorem \ref{thm-estap}. Therefore, we start as in the proof of  \eqref{estBV} with \eqref{depEF}. But, we treat in a different manner the terms $E_2$ and $F_2$. Indeed, for all $K\in\T$ and all $\sigma\in\E_{K}$, the Scharfetter-Gummel fluxes $\FKsig^{n+1}$ and $\GKsig^{n+1}$ defined by \eqref{FLUX-SG} rewrite 
\begin{subequations}\label{flux_tilde}
\begin{eqnarray}
\label{flux_tildeN}\!&&\hspace{-0.9cm}\FKsig^{n+1}= \ts\!\! \left ( {\tilde B}(-D\Psi_{K,\sigma}^{n+1})\NKnp\!-\!{\tilde B}(D\Psi_{K,\sigma}^{n+1})N_{K,\sigma}^{n+1}\!-\!DN_{K,\sigma}^{n+1}\right)\!\!=\! {\widetilde {\mathcal F}}_{K,\sigma}^{n+1}-\ts DN_{K,\sigma}^{n+1} \\
\label{flux_tildeP}\!&&\hspace{-0.9cm}\GKsig^{n+1}=\ts \!\! \left ({\tilde B}(D\Psi_{K,\sigma}^{n+1})\PKnp-{\tilde B}(-D\Psi_{K,\sigma}^{n+1})P_{K,\sigma}^{n+1}\!-\!DP_{K,\sigma}^{n+1}\right)\!\! =\! {\widetilde {\mathcal G}}_{K,\sigma}^{n+1}-\ts DP_{K,\sigma}^{n+1}
\end{eqnarray}
\end{subequations}
with ${\tilde B}$ defined by $ {\tilde B}(x)=B(x)-1$ for all $x\in\R$. 
Therefore 
\begin{equation}\label{E2F2bis}
E_2+F_2= \somn \Delta t\sum_{\sigma\in\E} \ts (D_\sigma N^{n+1})^2+
\somn \Delta t\sum_{\sigma\in\E} \ts (D_\sigma P^{n+1})^2 +{\tilde E}_2+{\tilde F}_2,
\end{equation}
with 
$
{\tilde E}_2= -\somn \Delta t\somsig {\widetilde {\mathcal F}}_{K,\sigma}^{n+1} DN_{K,\sigma}^{n+1}$
and $ {\tilde F}_2= -\somn \Delta t\somsig{\widetilde {\mathcal G}}_{K,\sigma}^{n+1} DP_{K,\sigma}^{n+1}.
$

But, as for the fluxes $\FKsig^{n+1}$, we can rewrite the fluxes ${\widetilde {\mathcal F}}_{K,\sigma}^{n+1}$ either under the form \eqref{fluxpos} or \eqref{fluxneg} with ${\tilde B}$ instead of $B$. Then, as $x(x-y)=\frac{1}{2}(x-y)^2+\frac{1}{2}(x^2-y^2)$, we get either 
\vspace{-0.3cm}
\begin{subequations}
\begin{eqnarray}
-{\widetilde {\mathcal F}}_{K,\sigma}^{n+1} DN_{K,\sigma}^{n+1}&=&
\ts\Biggl(\frac{D\Psi_{K,\sigma}^{n+1}}{2}(D_{\sigma}N^{n+1})^2+\frac{D\Psi_{K,\sigma}^{n+1}}{2}\left((\NKnp)^2-(N_{K,\sigma}^{n+1})^2\right)\nonumber\\
[-8pt]&&\hspace{4.2cm}+\tilde B (D\Psi_{K,\sigma}^{n+1})(D_{\sigma}N^{n+1})^2\Biggl),\label{relneg}\\
\mbox{ or } -{\widetilde {\mathcal F}}_{K,\sigma}^{n+1} DN_{K,\sigma}^{n+1}&=&
\ts\Biggl(-\frac{D\Psi_{K,\sigma}^{n+1}}{2}(D_{\sigma}N^{n+1})^2\!-\frac{D\Psi_{K,\sigma}^{n+1}}{2}\left((N_{K,\sigma}^{n+1})^2-(\NKnp)^2\right)\!\nonumber\\
[-8pt]&&\hspace{4.cm}+\!\tilde B (-D\Psi_{K,\sigma}^{n+1})(D_{\sigma}N^{n+1})^2\Biggl).\label{relpos}
\end{eqnarray}
\end{subequations}
But, $\tilde B(x)\geq 0$ for all $x\leq 0$ and $\tilde B (-x)\geq 0$ for all $x\geq 0$. Then, using \eqref{relneg} when $D\Psi_{K,\sigma}^{n+1}\leq 0$ and \eqref{relpos} when $D\Psi_{K,\sigma}^{n+1}\geq 0$, we obtain in both cases
$$
-{\widetilde {\mathcal F}}_{K,\sigma}^{n+1} DN_{K,\sigma}^{n+1} \geq 
\ds\frac{\ts}{2} \left (-D_{\sigma}\Psi^{n+1}(D_{\sigma}N^{n+1})^2+ D\Psi_{K,\sigma}^{n+1}\left((\NKnp)^2-(N_{K,\sigma}^{n+1})^2\right)\right).
$$
Similarly, we have 
$$
-{\widetilde {\mathcal G}}_{K,\sigma}^{n+1} DP_{K,\sigma}^{n+1} \geq 
\ds\frac{\ts}{2} \left (-D_{\sigma}\Psi^{n+1}(D_{\sigma}P^{n+1})^2- D\Psi_{K,\sigma}^{n+1}\left((\PKnp)^2-(P_{K,\sigma}^{n+1})^2\right)\right).
$$
It yields, after a discrete integration by parts
$$
\tilde E_2+\tilde F_2\geq -\ds\frac{1}{2}T_{BV}+\frac{1}{2}\somn\Delta t\somK\sum_{\sigma\in\E_K} \ts D\Psi_{K,\sigma}^{n+1} \left((\NKnp)^2-(\PKnp)^2\right),$$
and, thanks to the scheme \eqref{scheme-Psi},
\begin{equation}\label{Et2Ft2}
\tilde E_2+\tilde F_2\geq -\ds\frac{1}{2}T_{BV}.
\end{equation}
Then, we deduce the discrete $L^2(0,T;H^1)$ estimate on $N$ and $P$ \eqref{estL2H1_NP} from \eqref{depEF}, \eqref{minE1F1}, \eqref{majE3}, \eqref{E2F2bis}, \eqref{Et2Ft2} and the weak-BV inequality \eqref{estBV}. 

\subsection{Discrete $L^2(0,T;H^1(\Omega))$ estimate on $\Psi$}

We conclude the proof of Theorem \ref{thm-estap} with the proof of the $L^2(0,T,H^1)$ estimate on $\Psi$ \eqref{estL2H1_Psi}. 
Once more, we use Theorem \ref{thm-entropy} in the proof. 

Let us first consider the case $\lambda=0$. In this case, multiplying the scheme on $\Psi$ \eqref{Sz-Psi} by $\Delta t(\Psi_K^{n+1}-\Psi_K^D)$ and summing over $K\in\T$ and $n\in \{0,\ldots,N_T-1\}$, we get:
$$
\somn \Delta t\somsig \ts D\Psi_{K,\sigma}^{n+1}\left(D\Psi_{K,\sigma}^{n+1}-D\Psi_{K,\sigma}^{D}\right)\left(N_K^{n+1}+N_{K,\sigma}^{n+1}\right)=0. 
$$
Then, thanks to the $L^{\infty}$-estimate \eqref{estLinf} and the inequality $a(a-b)\geq a^2/2-b^2/2$ ($\forall a,b \in\R$), we obtain:
$$
\somn \Delta t\somsige \ts (D_\sigma\Psi^{n+1})^2\leq \somn \Delta t\somsige \ts (D_\sigma\Psi^{D})^2,
$$
which yields \eqref{estL2H1_Psi}. 

Now, let us consider the case $\lambda>0$. We follow the ideas developed by I. Gasser in \cite{Gasser} at the continuous level and adapt them to the case of mixed  boundary conditions. 
We set:
$$
\begin{aligned}
&{\mathcal J}= \somn\Delta t\somK \mK \ds\frac{(N_K^{n+1}-P_K^{n+1})^2}{\lambda^2}\\
&\hspace{1cm}+\somn \Delta t\somsig \ts (\min (\NKnp,N_{K,\sigma}^{n+1})+ \min (\PKnp,P_{K,\sigma}^{n+1}))(D_\sigma\Psi^{n+1})^2.
\end{aligned}
$$
Multipliying the scheme on $\Psi$ \eqref{scheme-Psi} by  $\Delta t(\PKnp-\NKnp)/\lambda^2$ and summing over $K\in\T$ and $n\in \{0,\ldots,N_T-1\}$, we get 
$$
\somn\Delta t\somK \mK \ds\frac{(N_K^{n+1}-P_K^{n+1})^2}{\lambda^2}=\somn\Delta t\somsig\ts D\Psi_{K,\sigma}^{n+1}(DP_{K,\sigma}^{n+1}-DN_{K,\sigma}^{n+1}),
$$
due to the quasi-neutrality of the boundary conditions \eqref{QN-CL}. Therefore, ${\mathcal J}$ may be split into ${\mathcal J}={\mathcal J}_1+{\mathcal J}_2$ with 
\begin{eqnarray*}
{\mathcal J}_1&=& \somn\Delta t\somsig \ts D\Psi_{K,\sigma}^{n+1}\Biggl(\min (\PKnp,P_{K,\sigma}^{n+1}) D(\log P+\Psi)_{K,\sigma}^{n+1}\\
[-20pt]&&\hspace{5.8cm}-\min (\NKnp,N_{K,\sigma}^{n+1}) D(\log N-\Psi)_{K,\sigma}^{n+1}\Biggl),\\
[-8pt]{\mathcal J}_2&=&\somn\Delta t\somsig \ts D\Psi_{K,\sigma}^{n+1}\Biggl(\left(DP_{K,\sigma}^{n+1}-\min(\PKnp,P_{K,\sigma}^{n+1}) D(\log P)_{K,\sigma}^{n+1}\right)\\
[-20pt]&&\hspace{4.6cm}
-\left(DN_{K,\sigma}^{n+1}-\min(\NKnp,N_{K,\sigma}^{n+1}) D(\log N)_{K,\sigma}^{n+1}\right)\Biggl).
\end{eqnarray*}

Applying Young inequality on ${\mathcal J}_1$, we get 
\begin{multline*}
\hspace{-0.5cm}\left\vert {\mathcal J}_1\right\vert \leq \frac{1}{2}\!\! \somn\!\!\Delta t\!\Biggl[\somsig \!\!\ts (D_\sigma\Psi^{n+1})^2\!\left(\min (\NKnp,N_{K,\sigma}^{n+1})+ \min (\PKnp,P_{K,\sigma}^{n+1})\right)\!+\! \mathbb I^{n+1}\Biggl]
\\
\hspace{-0.2cm}\leq \frac{1}{2} \!\somn\!\!\Delta t\!\!\somsig \!\ts (D_\sigma\Psi^{n+1})^2\left(\min (\NKnp,N_{K,\sigma}^{n+1})
+ \min (\PKnp,P_{K,\sigma}^{n+1})\right)\\
+\frac{K_E(1+\lambda^2)}{2}.
\end{multline*}

Now, we estimate the term ${\mathcal J}_2$ which does not appear at the continuous level because $\nabla N=N\nabla \log N$. For all $x,y>0$ we have 
$$
\left\vert \log y-\log x - \ds\frac{y-x}{\min(x,y)}\right\vert \leq \frac{(x-y)^2}{2\min(x,y)^2}.
$$
It yields 
$$
\begin{gathered}
\left\vert DP_{K,\sigma}^{n+1}-\min(\PKnp,P_{K,\sigma}^{n+1}) D(\log P)_{K,\sigma}^{n+1}\right\vert \leq \frac{(DP_{K,\sigma}^{n+1})^2}{2m},\\
\left\vert DN_{K,\sigma}^{n+1}-\min(\NKnp,N_{K,\sigma}^{n+1}) D(\log N)_{K,\sigma}^{n+1}\right\vert \leq \frac{(DN_{K,\sigma}^{n+1})^2}{2m}
\end{gathered}
$$
and 
$$
\left\vert {\mathcal J}_2\right\vert \leq \frac{1}{2m}\somn\Delta t\somsigint \ts\vert D\Psi_{K,\sigma}^{n+1}\vert 
\Bigl((DP_{K,\sigma}^{n+1})^2+ (DN_{K,\sigma}^{n+1})^2\Bigl)
\leq  \frac{K_{BV}(1+\lambda^2)}{2m} .
$$

As ${\mathcal J}={\mathcal J}_1+{\mathcal J}_2$, the estimates on ${\mathcal J}_1$ and ${\mathcal J}_2$ imply that 
$$
\mathcal J\leq \frac{mK_E+K_{BV}}{2m}(1+\lambda^2).$$
As $N$ and $P$ are lower bounded by $m$ \eqref{estLinf}, it yields \eqref{estL2H1_Psi} in the case $\lambda>0$.

\section{Numerical experiments}\label{sec.numexp}

In this section we present some numerical results in one and two space dimensions. Our purpose is to illustrate the stability of the fully implicit Scharfetter-Gummel scheme for all nonnegative values of the rescaled Debye length $\lambda$. 

\subsection{Test case 1: 1D with $C=0$}

First, we consider a one dimensional test case on $\Omega=(0,1)$, with a zero doping profile since this situation corresponds to the one studied in this paper. Initial data are constant
$N_{0}(x)=P_{0}(x)=0.5$, $\forall x \in (0,1)$.
We consider quasi-neutral Dirichlet boundary conditions
$N^{D}(0)=P^{D}(0)=0.1$, $\Psi^{D}(0)=0$ and $N^{D}(1)=P^{D}(1)=0.9$, $\Psi^{D}(1)=4$.

\begin{figure}[ht!]
\psfrag{lambda1}{\scriptsize$\lambda^2=1$}
\psfrag{lambda2}{\scriptsize$\lambda^2=10^{-2}$}
\psfrag{lambda3}{\scriptsize$\lambda^2=10^{-5}$}
\psfrag{lambda4}{\scriptsize$\lambda^2=10^{-9}$}
\psfrag{lambda5}{\scriptsize$\lambda^2=0$}
\psfrag{deltat}{$\Delta t$}
\psfrag{Nerror}{$L^1$ error on $N$}
\psfrag{Psierror}{$L^1$ error on $\Psi$}
\includegraphics[scale=0.33]{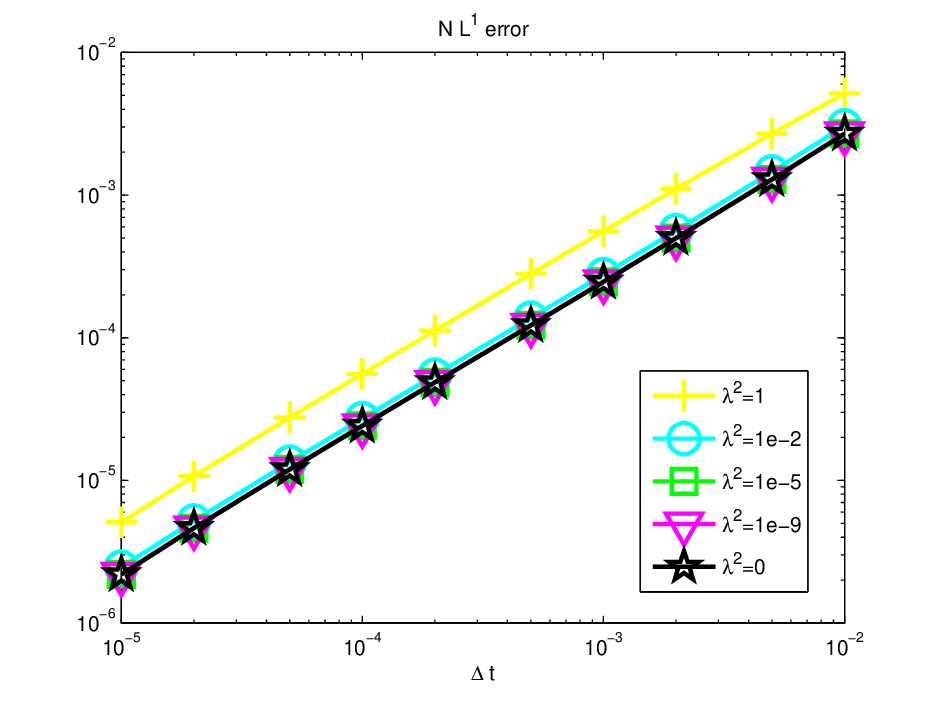}\hspace{-0.2cm}
\includegraphics[scale=0.33]{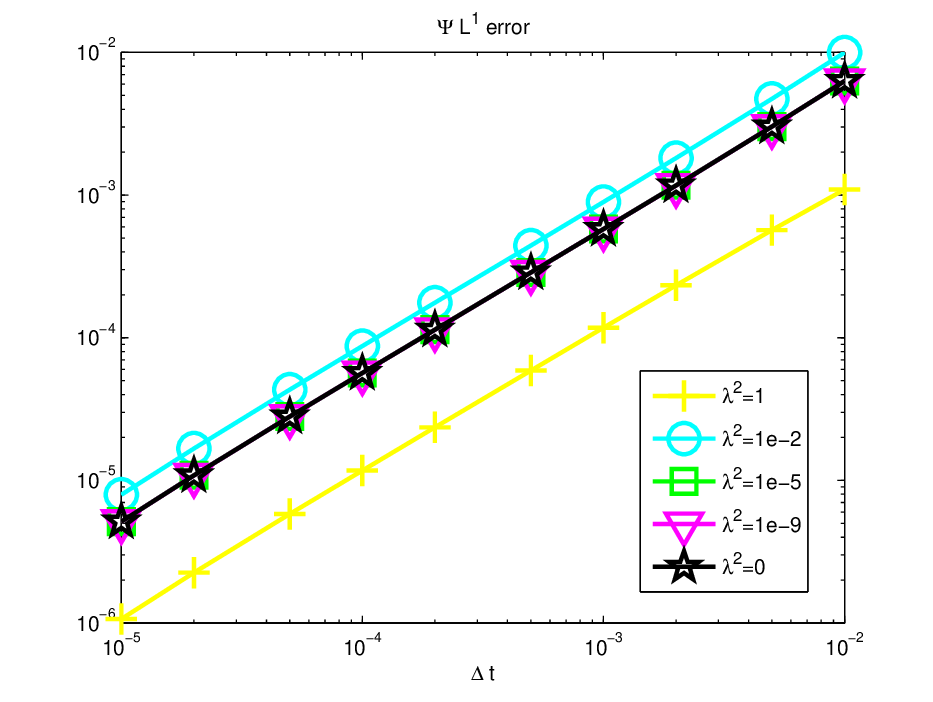}
\caption{Test case 1. Errors in $L^1$ norm as functions of $\Delta t$, for different values of $\lambda^{2}$ .}
\label{test1_courbe2}
\end{figure}

\begin{figure}[ht!]
\psfrag{dx1}{\scriptsize$\Delta x=20^{-1}$}
\psfrag{dx2}{\scriptsize$\Delta x=40^{-1}$}
\psfrag{dx3}{\scriptsize$\Delta x=80^{-1}$}
\psfrag{dx4}{\scriptsize$\Delta x=160^{-1}$}
\psfrag{dx5}{\scriptsize$\Delta x=320^{-1}$}
\psfrag{dx6}{\scriptsize$\Delta x=640^{-1}$}
\psfrag{dx7}{\scriptsize$\Delta x=1280^{-1}$}
\psfrag{dx8}{\scriptsize$\Delta x=2560^{-1}$}
\psfrag{lambda2}{$\lambda^2$}
\psfrag{Nerror}{$L^1$ error on $N$}
\psfrag{Psierror}{$L^1$ error on $\Psi$}
\includegraphics[scale=0.35]{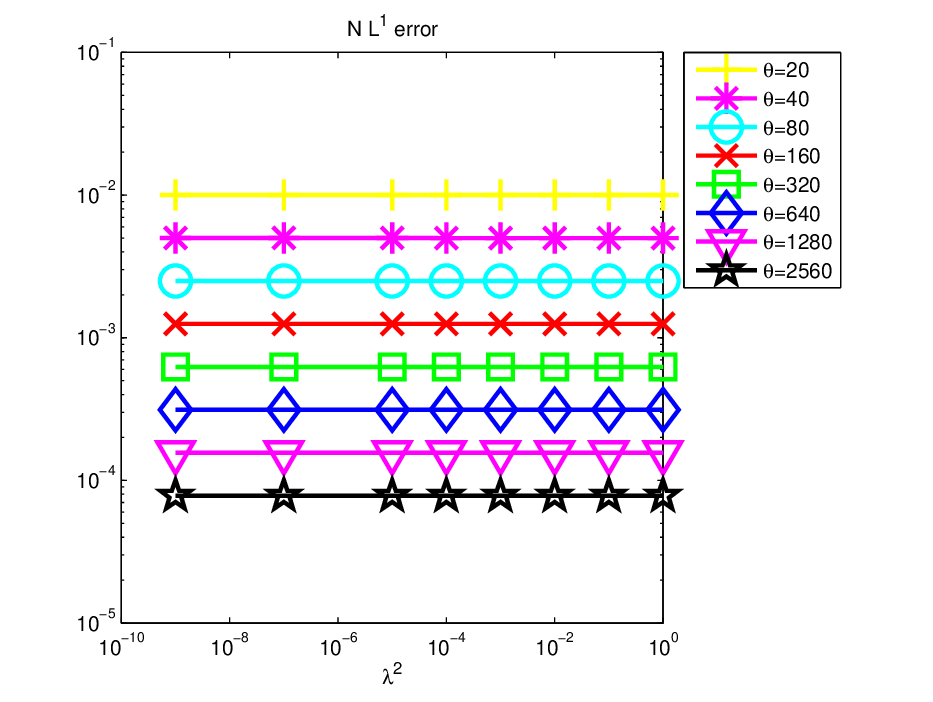}\hspace{-0.2cm}
\includegraphics[scale=0.35]{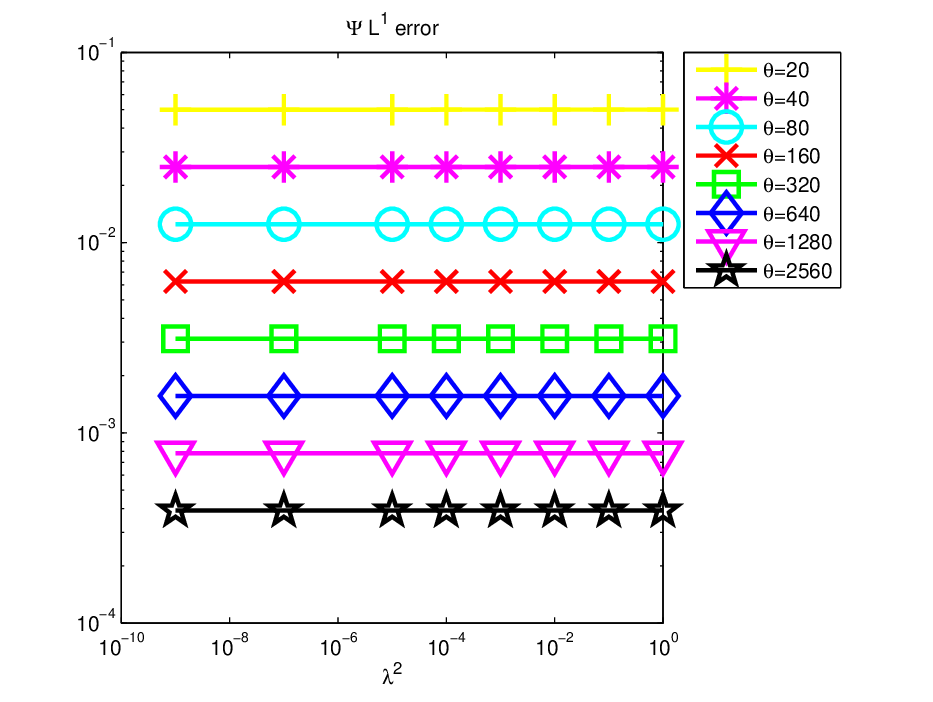}
\caption{Test case 1. Errors in $L^1$ norm  as functions of $\lambda^{2}$, for different values of $\Delta x$ .}
\label{test1_courbe4}
\end{figure}

Since the exact solution of this problem is not available, we compute a reference solution on a uniform mesh made of $10240=20\times 2^9$ cells, with time 
step $\Delta t=10^{-6}$, for different values of $\lambda^{2}$ in $[0,1]$. This reference solution is then used to compute the $L^{1}$ error
 for the variables $N$, $P$ and $\Psi$. In order to prove the asymptotic preserving behavior of the scheme, we compute $L^{1}$ errors at time $T=0.1$ for different numbers of cells $\theta=20\times 2^{i}$, $i\in \{0,...,8\}$, with different time steps $\Delta t$ in $[10^{-5},10^{-2}]$ and various rescaled Debye length $\lambda^{2}$ in $[0,1]$. Figure \ref{test1_courbe2} presents the $L^1$ error on the electron density and on the electrostatic potential as functions of $\Delta t$ for different values of $\lambda^{2}$. It clearly shows the uniform behavior in the limit $\lambda$ tends to $0$ since the convergence rate is of order 1 for all variables even for small values of $\lambda^{2}$, including zero. Similar results are obtained for the hole density.

We plot the $L^1$ errors as functions of $\lambda^2$ for different values of the space step on Figure~\ref{test1_courbe4}. We still observe the asymptotic preserving properties of the scheme in the limit $\lambda$ tends to zero. Moreover, the errors are independent of $\lambda^{2}$. 

\subsection{Test case 2: 1D with a discontinuous doping profile}

Here, we consider a nonzero discontinuous doping profile on $\Omega=(0,1)$, which corresponds to the physically relevant hypothesis, but not to the framework of our study:
\begin{equation*}
C(x)=\left\{\begin{array}{lcl}
-0.8 & \text{ for } & x \leq 0.5, \\ +0.8 &\text{ for } & x>0.5.
\end{array}\right.
\end{equation*}
The initial conditions are
$N_{0}(x)=(1+C(x))/2$, $P_{0}(x)=(1-C(x))/2$ for all $x \in (0,1)$.
And, the boundary conditions are still quasi-neutral and of the Dirichlet type
$N^{D}(0)=0.1$,\quad $P^{D}(0)=0.9$, $\Psi^{D}(0)=0$ and
 $N^{D}(1)=0.9$, $P^{D}(1)=0.1$, $\Psi^{D}(1)=4$.
\begin{figure}[ht!]
\psfrag{lambda1}{\scriptsize$\lambda^2=1$}
\psfrag{lambda2}{\scriptsize$\lambda^2=10^{-3}$}
\psfrag{lambda3}{\scriptsize$\lambda^2=10^{-7}$}
\psfrag{lambda4}{\scriptsize$\lambda^2=0$}
\psfrag{deltat}{$\Delta t$}
\psfrag{Nerror}{$L^1$ error on $N$}
\psfrag{Psierror}{$L^1$ error on $\Psi$}
\subfigure{\includegraphics[scale=0.32]{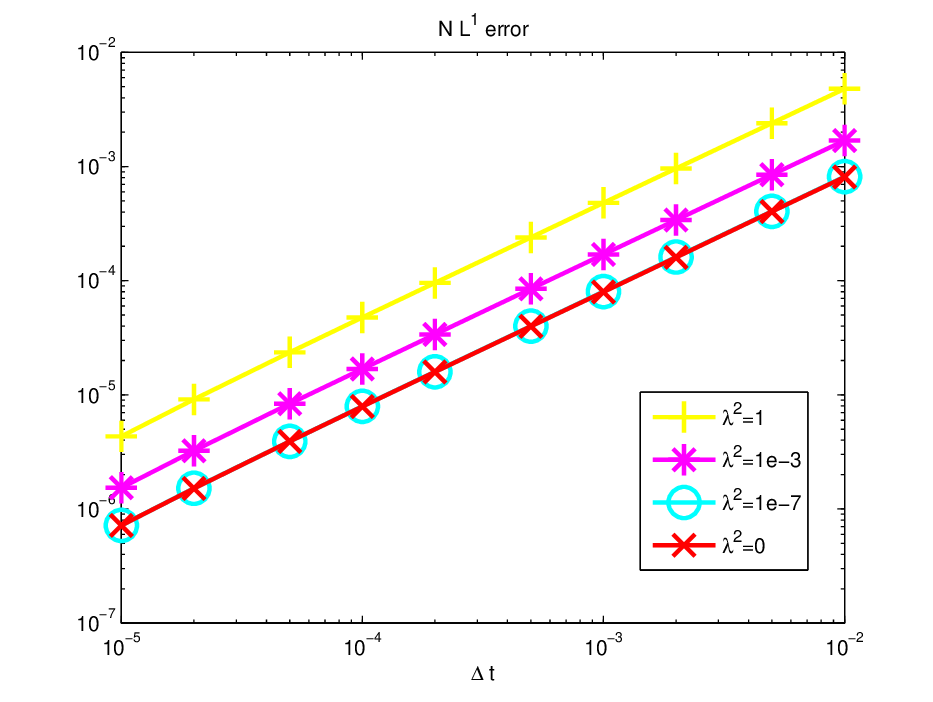}}
\subfigure{\includegraphics[scale=0.32]{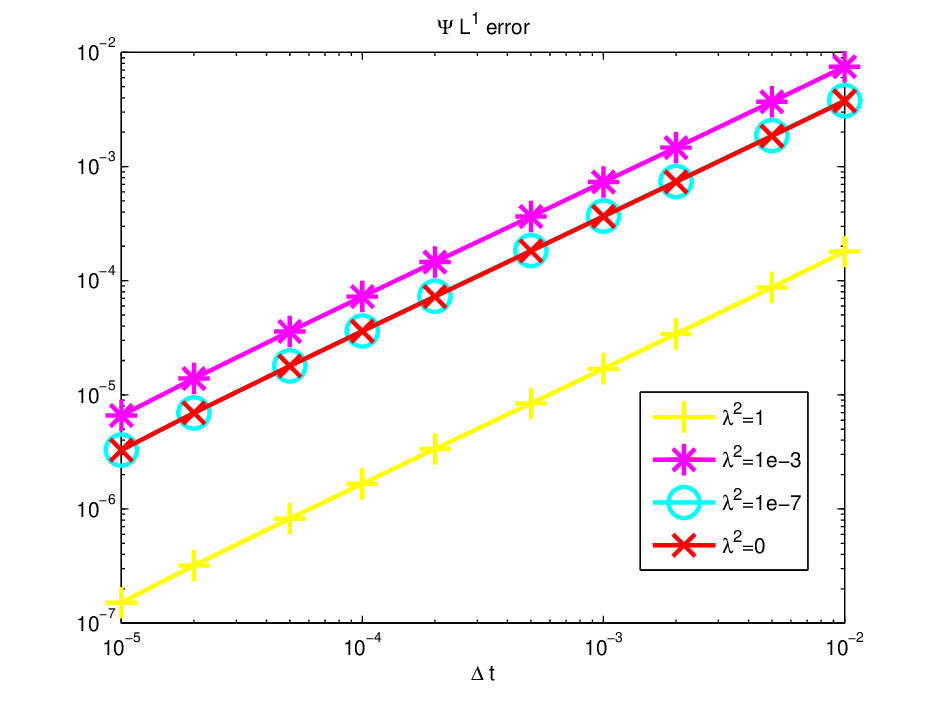}}
\caption{Test case 2. Errors in $L^1$ norm  as functions of $\Delta t$, for different values of $\lambda^{2}$.}
\label{test2_courbe1}
\end{figure}
Figure \ref{test2_courbe1} presents the error in $L^{1}$ norm between the approximate solution and the reference solution computed as previously. We observe that  the convergence rate does not depend on the value of $\lambda^{2}$. It seems that the scheme is still asymptotic preserving at the quasi-neutral limit even if the doping profile $C$ is not zero.

\subsection{Test case 3: PN-junction in 2D}

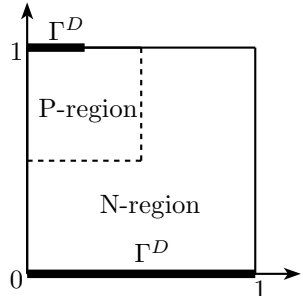
\begin{figure}[ht!]
\psset{xunit=0.3cm,yunit=0.3cm,algebraic=true,dotstyle=o,dotsize=3pt 0,linewidth=0.8pt,arrowsize=3pt 2,arrowinset=0.25}
\centerline{
\begin{pspicture*}(-4.11,-7.94)(14.21,6.1)
\psline{->}(-1,-7)(-1,5)
\psline{->}(-1,-7)(11,-7)
\psline[linewidth=3.2pt](-1,-7)(9,-7)
\psline(9,-7)(9,3)
\psline(9,3)(-1,3)
\psline(-1,3)(-1,-7)
\psline[linestyle=dashed,dash=2pt 2pt](-1,-2)(4,-2)
\psline[linestyle=dashed,dash=2pt 2pt](4,-2)(4,3)
\psline(4,3)(-1,3)
\psline(-1,3)(-1,-2)
\rput[tl](-0.5,0.74){P-region}
\rput[tl](2.18,-3.5){N-region}
\psline[linewidth=3.2pt](-1,3)(1.52,3)
\rput[tl](-0.06,4.33){$\Gamma^{D}$}
\rput[tl](3.7,-5.50){$\Gamma^{D}$}
\rput[tl](-1.76,-6.88){0}
\rput[tl](8.92,-7.32){1}
\rput[tl](-1.76,3.21){1}
\end{pspicture*}}
\caption{Geometry of the PN-junction diode}
\label{jonctionPN}
\end{figure}

Now, we present a test case for a geometry corresponding to a PN-junction in 2D (see Figure \ref{jonctionPN}). 
The domain $\Omega$ is the square $(0,1)^{2}$. The doping profile is piecewise constant, equal to $0.8$ in the N-region and $-0.8$ in the P-region. The Dirichlet boundary conditions are
$N^{D}=0.9$, $P^{D}=0.1$, $\Psi^{D}=1.1$ on $\{y=0\}$, and
$N^{D}=0.1$, $P^{D}=0.9$, $\Psi^{D}=-1.1$ on $\{y=1, \,0\leq x\leq 0.25\}$.
Elsewhere we put homogeneous Neumann boundary conditions. Initial conditions are
$N_{0}(x,y)=(1+C(x,y))/2$, and $P_{0}(x,y)=(1-C(x,y))/2$ .

The electron density profile at time $T=1$ with a mesh made of 3584 triangles and a time step $\Delta t=10^{-2}$ for $\lambda^{2}=1$ and $10^{-9}$ are shown in Figure~\ref{test3_courbe1}. We observe that the scheme remains efficient even for small values of the Debye length and with a large time step.

\begin{figure}[ht!]
\subfigure[Electron density $N$, $\lambda^{2}=1$.]{\includegraphics[scale=0.4]{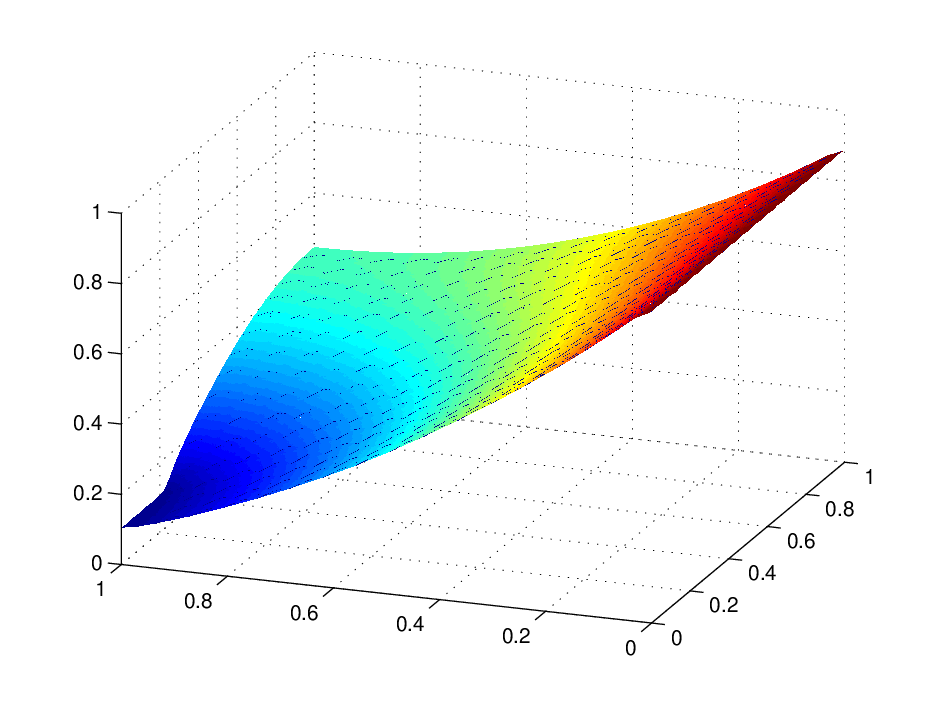}}
\subfigure[Electron density $N$, $\lambda^{2}=10^{-9}$.]{\includegraphics[scale=0.4]{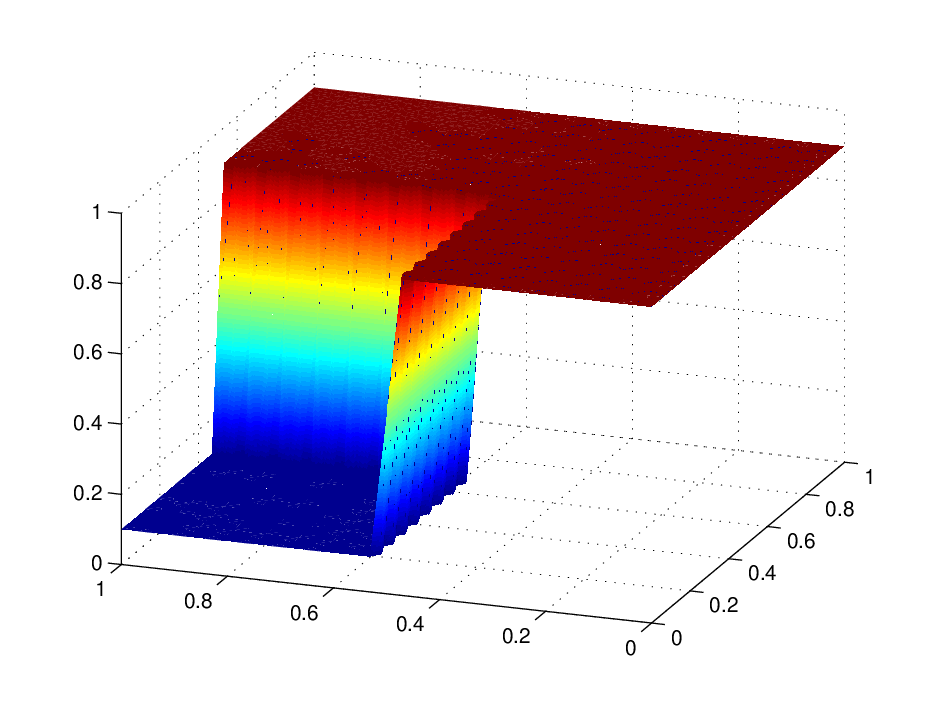}}
\caption{Test case 3. Electron density computed at time $T=1$  with a mesh of 3584 triangles and a time step $\Delta t=10^{-2}$   for $\lambda^{2}=1$ and  $\lambda^{2}=10^{-9}$.}
\label{test3_courbe1}
\end{figure}

\appendix
\section{Properties of the Scharfetter-Gummel numerical fluxes}

We recall that the Scharfetter-Gummel numerical fluxes $\FKsig^{n+1}$ and $\GKsig^{n+1}$ defined by \eqref{FLUX-SG}
can be seen respectively as numerical approximations of 
$\int_\sigma (-\nabla N+N\nabla \Psi)\cdot \nKsig$  and $\int_\sigma (-\nabla P-P\nabla \Psi)\cdot \nKsig$ on the interval $[t^n, t^{n+1})$. At the continuous level, we may rewrite $-\nabla N+N\nabla \Psi=-N\nabla (\log N -\Psi)$ and $-\nabla P-P \nabla \Psi=-P\nabla (\log P+\Psi)$. Such  equalities cannot be kept at the discrete level. However, we can give lower and upper bounds of $\FKsig^{n+1}$ and $\GKsig^{n+1}$  by terms of the form $-N_{\sigma}^{n+1} D(\log N-\Psi)_{K,\sigma}^{n+1}$ and $-P_{\sigma}^{n+1} D(\log P+\Psi)_{K,\sigma}^{n+1}$ , as shown in 
Proposition~\ref{fluxprop}. 
\begin{proposition}\label{fluxprop}
For all $K\in\T$ and all $\sigma\in\E_K$, the flux $\FKsig^{n+1}$ defined by~\eqref{FLUX-SG-N} satisfies the following inequalities:
\begin{subequations}\label{fluxpropF}
\begin{multline}
\mbox{If } D(\log N-\Psi)_{K,\sigma}^{n+1}\geq 0, \quad -\max(\NKnp,N_{K,\sigma}^{n+1})D(\log N-\Psi)_{K,\sigma}^{n+1}\leq\frac{\FKsig^{n+1}}{\ts}\\
\mbox{ and }\frac{\FKsig^{n+1}}{\ts} \leq-\min(\NKnp,N_{K,\sigma}^{n+1})D(\log N-\Psi)_{K,\sigma}^{n+1}
.\label{fluxprop1}
\end{multline}
\begin{multline}
\mbox{If } D(\log N-\Psi)_{K,\sigma}^{n+1}\leq 0,\quad -\min(\NKnp,N_{K,\sigma}^{n+1})D(\log N-\Psi)_{K,\sigma}^{n+1}\leq\frac{\FKsig^{n+1}}{\ts}\\
\mbox{ and } \frac{\FKsig^{n+1}}{\ts}\leq  -\max(\NKnp,N_{K,\sigma}^{n+1})D(\log N-\Psi)_{K,\sigma}^{n+1}
.\label{fluxprop2}
\end{multline}
\end{subequations}

Replacing $\Psi$ by $-\Psi$ and $N$ by $P$ yields similar properties for the flux $\GKsig^{n+1}$.
\end{proposition}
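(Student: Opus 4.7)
\textbf{Proof proposal for Proposition \ref{fluxprop}.}

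The plan is to rewrite the Scharfetter--Gummel flux in the form $\FKsig^{n+1}/\ts = -\alpha\,D(\log N-\Psi)_{K,\sigma}^{n+1}$ for some coefficient $\alpha$ sandwiched between $\min(\NKnp,N_{K,\sigma}^{n+1})$ and $\max(\NKnp,N_{K,\sigma}^{n+1})$. Once this is done, the four inequalities \eqref{fluxprop1}--\eqref{fluxprop2} follow immediately by distinguishing the sign of $\delta:=D(\log N-\Psi)_{K,\sigma}^{n+1}$. To lighten notation I would write $a=\NKnp$, $b=N_{K,\sigma}^{n+1}$, $h=D\Psi_{K,\sigma}^{n+1}$, so that $\delta=\log(b/a)-h$ and $\FKsig^{n+1}/\ts = B(-h)a-B(h)b$.

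The first step is the algebraic identity. Using $B(h)=B(-h)e^{-h}$ (which follows directly from the definition \eqref{Bern}), one gets $\FKsig^{n+1}/\ts = B(-h)\bigl(a-e^{-h}b\bigr) = a\,B(-h)\bigl(1-e^{\delta}\bigr)$, since $e^{-h}\,b/a = e^{\log(b/a)-h}=e^{\delta}$. Now $1-e^{\delta}=-\delta/B(\delta)$ (again from \eqref{Bern}), which yields
\[
\frac{\FKsig^{n+1}}{\ts} \;=\; -\,a\,\frac{B(-h)}{B(\delta)}\,\delta.
\]
By the symmetric computation starting from $B(-h)=B(h)e^{h}$ one also gets $\FKsig^{n+1}/\ts = -\,b\,B(h)\,\delta/B(-\delta)$. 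I would record both representations and set $\alpha := a\,B(-h)/B(\delta) = b\,B(h)/B(-\delta)$.

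The second step is to locate $\alpha$ between $\min(a,b)$ and $\max(a,b)$. Since $B$ is strictly decreasing on $\R$ (a one-line check: $B'(x)=(e^x-1-xe^x)/(e^x-1)^2<0$), one has $B(-h)\ge B(\delta)\Leftrightarrow -h\le\delta\Leftrightarrow h+\delta\ge 0 \Leftrightarrow \log(b/a)\ge 0\Leftrightarrow b\ge a$, and similarly $B(h)\le B(-\delta)\Leftrightarrow b\ge a$. Consequently, when $b\ge a$ both ratios $B(-h)/B(\delta)\ge 1$ and $B(h)/B(-\delta)\le 1$, so $a\le \alpha\le b$; when $b<a$ the reverse inequalities hold, so $b\le\alpha\le a$. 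In every case $\min(a,b)\le\alpha\le\max(a,b)$.

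The third step is just to read off the bounds. If $\delta\ge 0$, multiplying $\min(a,b)\le\alpha\le\max(a,b)$ by $-\delta\le 0$ reverses inequalities and gives exactly \eqref{fluxprop1}; if $\delta\le 0$, multiplying by $-\delta\ge 0$ preserves them and gives \eqref{fluxprop2}. The case of $\GKsig^{n+1}$ follows by the stated substitution $\Psi\leftarrow-\Psi$, $N\leftarrow P$, which is compatible with the definition \eqref{FLUX-SG-P}.

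The only nontrivial step is the derivation of the clean identity $\FKsig^{n+1}/\ts=-\alpha\,\delta$; I expect this to be the main (mild) obstacle, since it requires recognising that the two exponential factors in the SG flux combine into $1-e^{\delta}$ and then using $1-e^\delta=-\delta/B(\delta)$ to factor out $\delta$. After that, the rest is the monotonicity of $B$ together with sign bookkeeping.
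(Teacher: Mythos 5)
Your proof is correct. It rests on the same two ingredients as the paper's proof (elementary identities for the Bernoulli function and its monotonicity), but the mechanics are genuinely different. The paper starts from the two equivalent forms $\FKsig^{n+1}=\ts\bigl(D\Psi_{K,\sigma}^{n+1}N_K^{n+1}-B(D\Psi_{K,\sigma}^{n+1})DN_{K,\sigma}^{n+1}\bigr)$ and $\FKsig^{n+1}=\ts\bigl(D\Psi_{K,\sigma}^{n+1}N_{K,\sigma}^{n+1}-B(-D\Psi_{K,\sigma}^{n+1})DN_{K,\sigma}^{n+1}\bigr)$, inserts $B\bigl(D(\log N)_{K,\sigma}^{n+1}\bigr)DN_{K,\sigma}^{n+1}=D(\log N)_{K,\sigma}^{n+1}N_K^{n+1}$ to produce an \emph{additive} decomposition $\FKsig^{n+1}/\ts=-N_K^{n+1}\,\delta+R_1=-N_{K,\sigma}^{n+1}\,\delta+R_2$ with remainders $R_1,R_2$ of opposite, controlled signs, and then distinguishes the sign of $DN_{K,\sigma}^{n+1}$ to decide which representation gives which one-sided bound. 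You instead obtain the exact \emph{multiplicative} factorization $\FKsig^{n+1}/\ts=-\alpha\,\delta$ with $\alpha=N_K^{n+1}B(-h)/B(\delta)=N_{K,\sigma}^{n+1}B(h)/B(-\delta)$, and the monotonicity of $B$ places $\alpha$ between $\min(\NKnp,N_{K,\sigma}^{n+1})$ and $\max(\NKnp,N_{K,\sigma}^{n+1})$. This is slightly stronger than the stated proposition (it exhibits a single intermediate value $\alpha$ realizing the flux as $-\alpha\,\delta$, a mean-value form), it removes the case distinction on the sign of $DN_{K,\sigma}^{n+1}$, and the final sign bookkeeping reduces to multiplying a two-sided bound by $-\delta$. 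The only points worth making explicit in a write-up are that $B>0$ everywhere (so the ratios defining $\alpha$ make sense and $\alpha>0$), that the degenerate cases $\delta=0$ and $\sigma\in\E_{K,ext}^N$ are trivially covered, and the one-line verification that the two expressions for $\alpha$ coincide (equivalent to $e^{h+\delta}=N_{K,\sigma}^{n+1}/\NKnp$, which is the definition of $\delta$).
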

\begin{proof}
Let $K\in\T$, first, we remark that \eqref{fluxpropF} is trivially satisfied if $\sigma\in\E_{K,ext}^N$ because all the terms of the inequalities vanish. 
Let $\sigma\in\E_{K,int}\cup\E_{K,ext}^D$, since the Bernoulli function $B$ defined by \eqref{Bern} satisfies the property \eqref{propBern},
 the flux $\FKsig^{n+1}$ defined by \eqref{FLUX-SG-N} can be either rewritten 
 \begin{subequations}\label{fluxposneg}
\begin{equation}\label{fluxpos}
\mathcal{F}_{K,\sigma}^{n+1}=\tau_{\sigma}\biggl(D\Psi_{K,\sigma}^{n+1}N_{K}^{n+1}-B\left(D\Psi_{K,\sigma}^{n+1}\right)DN_{K,\sigma}^{n+1}\biggl),
\end{equation}
\vspace{-0.5cm}
\begin{equation}\label{fluxneg}
\mbox{ or }\quad 
\mathcal{F}_{K,\sigma}^{n+1}=\tau_{\sigma}\biggl(D\Psi_{K,\sigma}^{n+1}N_{K,\sigma}^{n+1}-B\left(-D\Psi_{K,\sigma}^{n+1}\right)DN_{K,\sigma}^{n+1}\biggl).
\end{equation}
\end{subequations}
It implies
$$
\begin{aligned}
&\mathcal{F}_{K,\sigma}^{n+1}=\tau_{\sigma}\biggl[D\Psi_{K,\sigma}^{n+1}N_{K}^{n+1}-B\left(D(\log N)_{K,\sigma}^{n+1}\right)DN_{K,\sigma}^{n+1} \hspace{6cm}\\
&\hspace{4.5cm}+\biggl(B\left(D(\log N)_{K,\sigma}^{n+1}\right)-B\left(D\Psi_{K,\sigma}^{n+1}\right)\biggl)DN_{K,\sigma}^{n+1}\biggl],\\
\textrm{and } & \mathcal{F}_{K,\sigma}^{n+1}=\tau_{\sigma}\biggl[D\Psi_{K,\sigma}^{n+1}N_{K,\sigma}^{n+1}-B\left(-D(\log N)_{K,\sigma}^{n+1}\right)DN_{K,\sigma}^{n+1} \hspace{6cm}\\
&\hspace{4.cm}+\biggl(B\left(-D(\log N)_{K,\sigma}^{n+1}\right)-B\left(-D\Psi_{K,\sigma}^{n+1}\right)\biggl)DN_{K,\sigma}^{n+1}\biggl].
\end{aligned}
$$
But, the definition of the Bernoulli function \eqref{Bern} also ensures that 
$$
B(\log y -\log x)=\ds\frac{\log y -\log x}{y-x} x,\quad \forall x,y>0.
$$
Therefore, we get 
$$\mathcal{F}_{K,\sigma}^{n+1}=\tau_{\sigma}\!\left[-D(\log N-\Psi)_{K,\sigma}^{n+1} \NKnp+\!\!   \left(B\left(D(\log N)_{K,\sigma}^{n+1}\right)\!\!-\!\!B\left(D\Psi_{K,\sigma}^{n+1}\right)\right)DN_{K,\sigma}^{n+1}\right],$$
\textrm{ and } 
$$\mathcal{F}_{K,\sigma}^{n+1}=\tau_{\sigma}\!\left[  -D(\log N-\Psi)_{K,\sigma}^{n+1} N_{K,\sigma}^{n+1}+\!\!   \left(B\left(-D(\log N)_{K,\sigma}^{n+1}\right)\!\!-\!\!B\left(-D\Psi_{K,\sigma}^{n+1}\right)\right)DN_{K,\sigma}^{n+1} \right]
$$
Now, we may use the fact that $B$ is a non increasing function on $\R$. Assuming that the sign of 
$D(\log N-\Psi)_{K,\sigma}^{n+1}$ is known, the sign of $\left(B\left(D(\log N)_{K,\sigma}^{n+1}\right)-B\left(D\Psi_{K,\sigma}^{n+1}\right)\right)$ and $\left(B\left(-D(\log N)_{K,\sigma}^{n+1}\right)-B\left(-D\Psi_{K,\sigma}^{n+1}\right)\right)$ are also known (and opposite). Distinguishing the cases $DN_{K,\sigma}^{n+1}\geq 0$ ($N_K^{n+1}\leq N_{K,\sigma}^{n+1}$) and  
$DN_{K,\sigma}^{n+1}\leq 0$ ($N_K^{n+1}\geq N_{K,\sigma}^{n+1}$) yields inequalities~\eqref{fluxpropF}.
\end{proof}

Now, we give a straightforward consequence of Proposition \ref{fluxprop} as a Corollary.
\begin{corollary}
For all $K\in\T$ and all $\sigma\in\E_K$, the fluxes $\FKsig^{n+1}$ and $\GKsig^{n+1}$ defined  by \eqref{FLUX-SG} verify:
\begin{subequations}
\begin{align}
\FKsig^{n+1}\  D(\log N-\Psi)_{K,\sigma}^{n+1} \ \leq &-\ts \min(\NKnp,N_{K,\sigma}^{n+1})\left( D_\sigma(\log N-\Psi)^{n+1}\right)^2,\label{maj1}\\
\GKsig^{n+1}\  D(\log P+\Psi)_{K,\sigma}^{n+1} \ \leq &-\ts \min(\PKnp,P_{K,\sigma}^{n+1})\left( D_\sigma(\log P+\Psi)^{n+1}\right)^2.\label{maj2}
\end{align}
\end{subequations}
Moreover, if $\min(\NKnp,N_{K,\sigma}^{n+1})\geq 0$ and $\min(\PKnp,P_{K,\sigma}^{n+1})\geq 0$, we also have 
\begin{subequations}
\begin{align}
\left\vert \FKsig^{n+1}\right\vert&\leq \ts \max(\NKnp,N_{K,\sigma}^{n+1})\left\vert D_\sigma\left(\log N-\Psi\right)^{n+1}\right\vert,\label{majabs1}\\
\left\vert \GKsig^{n+1}\right\vert&\leq \ts \max(\PKnp,P_{K,\sigma}^{n+1})\left\vert D_\sigma\left(\log P+\Psi\right)^{n+1}\right\vert.\label{majabs2}
\end{align}
\end{subequations}
\end{corollary}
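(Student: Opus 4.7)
The four inequalities in this corollary are direct consequences of the two-sided bounds established in Proposition~\ref{fluxprop}. The plan is to treat \eqref{maj1} and \eqref{majabs1} carefully, and then to remark that the statements for $\GKsig^{n+1}$ follow by the analogue of Proposition~\ref{fluxprop} obtained through the substitution $\Psi\to -\Psi$, $N\to P$.

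For \eqref{maj1}, the idea is to multiply the appropriate one-sided bound of the proposition by $D(\log N-\Psi)_{K,\sigma}^{n+1}$, taking care to reverse the inequality when this factor is nonpositive. If $D(\log N-\Psi)_{K,\sigma}^{n+1}\geq 0$, the upper bound in \eqref{fluxprop1} reads $\FKsig^{n+1}/\ts \leq -\min(\NKnp,N_{K,\sigma}^{n+1})\,D(\log N-\Psi)_{K,\sigma}^{n+1}$; multiplying by this nonnegative factor and recalling that $(D_\sigma(\log N-\Psi)^{n+1})^{2}=(D(\log N-\Psi)_{K,\sigma}^{n+1})^{2}$ yields \eqref{maj1}. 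If instead $D(\log N-\Psi)_{K,\sigma}^{n+1}\leq 0$, I would use the lower bound in \eqref{fluxprop2} and multiply by a nonpositive factor, which reverses the inequality and produces the same conclusion.

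For \eqref{majabs1}, the hypothesis $\min(\NKnp,N_{K,\sigma}^{n+1})\geq 0$ automatically yields $\max(\NKnp,N_{K,\sigma}^{n+1})\geq 0$. Splitting by the sign of $D(\log N-\Psi)_{K,\sigma}^{n+1}$: in the nonnegative case \eqref{fluxprop1} gives $-\max(\NKnp,N_{K,\sigma}^{n+1})\,D(\log N-\Psi)_{K,\sigma}^{n+1}\leq \FKsig^{n+1}/\ts \leq 0$, hence $|\FKsig^{n+1}|/\ts \leq \max(\NKnp,N_{K,\sigma}^{n+1})\,|D_\sigma(\log N-\Psi)^{n+1}|$. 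The nonpositive case is symmetric via \eqref{fluxprop2}, and gives the same bound. The inequalities \eqref{maj2} and \eqref{majabs2} are obtained in exactly the same way after invoking the analogue of Proposition~\ref{fluxprop} for $\GKsig^{n+1}$.

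There is no genuine obstacle here: the whole argument is a sign-tracking exercise once Proposition~\ref{fluxprop} is in hand. The only point to watch is to reverse the inequality when multiplying through by a nonpositive quantity, and to observe that nonnegativity of the minimum transfers immediately to the maximum, so that the absolute-value bounds hold uniformly in both sign regimes for $D(\log N-\Psi)_{K,\sigma}^{n+1}$ and $D(\log P+\Psi)_{K,\sigma}^{n+1}$.
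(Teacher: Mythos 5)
Your proof is correct and is exactly the argument the paper intends: the corollary is presented there as a ``straightforward consequence'' of Proposition~\ref{fluxprop} with no written proof, and your sign-by-sign multiplication of the two-sided bounds \eqref{fluxprop1}--\eqref{fluxprop2}, together with the $\Psi\to-\Psi$, $N\to P$ substitution for $\GKsig^{n+1}$, is precisely the omitted verification.
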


\end{document}